\theoremstyle{plain} 
\newtheorem{theorem}{Theorem}[section]
\theoremstyle{definition} 
\newtheorem{definition}{Definition}[section]
\theoremstyle{remark}
\title {Forward Backward Stochastic Differential Equations - Asymptotics and a Large Deviations Principle}
\author{Ana Bela Cruzeiro \\
 Andr\'{e} de Oliveira Gomes}
\date{}
\begin{document} 
\pagenumbering{arabic}
\numberwithin{equation}{section}
\setlength{\parindent}{1 cm}

\maketitle
\begin{abstract}

  We study  the asymptotic behaviour of solutions of  Forward Backward Stochastic Differential Equations  in the coupled case, when the diffusion coefficient of the forward equation is multiplicatively perturbed by a small parameter that converges to zero. Furthermore, we establish a Large Deviation Principle for the laws of the corresponding processes.

\end{abstract}

\section{Introduction}
Forward-Backward Stochastic Differential Equations (FBSDEs for short) is a subject in Stochastic Analysis that is being  increasingly  studied by the ma-\\thematical community in the recent years.

Initially FBSDEs become known due to Stochastic Optimal Control Theory, with the pionering work [3], and, after  this kind of stochastic equations were developed in various fields, such as Mathematical Finance or Ma- \\ thematical Physics. The connections between Stochastic Differential Equations and Partial Differential Equations are well known since the celebrated Feynman-Kac formula. In this perspective, Peng [17] realized that a solution of a Backward Stochastic Differential Equation could be used as a probabilistic interpretation for the solutions of a huge class of quasilinear parabolic PDEs.

In this work, we present  the asymptotic study and the establishment of a Large Deviations Principle for the laws of the solutions of the Forward Backward Stochastic Differential Equations in the fully coupled case. One of the main points is   the independence of PDEs results in the asymptotic study of the FBSDE. These results only use probabilistic arguments. They are local in time, although,  it is also possible to establish global results in time, as we remark in Section 5. In this case the methods rely on PDE results.
 We present the asymptotic study of FBSDEs solutions when the diffusion coefficient of the forward equation approaches zero. This question adresses the problem of the convergence of classical/viscosity solutions of the quasilinear parabolic system of PDEs associated to the Backward Equation. When this quasilinear parabolic system of PDEs takes the form of the Backward Burguers Equation, the problem becomes the convergence of the solution when the viscosity parameter converges to zero. This study is the object of Section 3. 

In Section 4, using the classical tools of the Freidlin-Wentzell Theory and a Simple Version of the "Contraction Principle" (cf [7]), a Large Deviations Principle for the laws of the corresponding Forward Backward Processes of the system  is established.

\section{Preliminaries}
Let $T > 0$ , $d, k \in \mathbb{N}$ and $(\Omega, \mathcal{F}, \{\mathcal{F}_t \}_{0 \leq t \leq T}, \mathbb{P})$ be a complete filtered probability space. On such a space  we consider  a $d$- dimensional Brownian Motion $(B_t)_{0 \leq t \leq T}$ such that $\{  \mathcal{F}_t \}_{0 \leq t \leq T}$ is the natural filtration of $(B_t)_{0 \leq t \leq T}$, augmented with the collection of the null sets of $\Omega$, $\mathcal{N}:= \{ A \subset \Omega : \exists \, G \in \mathcal{F} \, $such that $ \mathbb{P}(G)=0\}$.

We define the following functional spaces, given $t \in [0,T]$:

$ S_t ^2(\mathbb{R}^d) $, the space of the continuous $ \{\mathcal{F}_t \}_{t \leq s \leq T} $ adapted processes $\phi : \Omega \times [t,T] \longrightarrow\ \mathbb{R}^d$ such that  \\

$\| \phi \|_{\infty}^2 = \mathbb{E} \displaystyle \sup_{t \leq s \leq T} {\mid \phi_s \mid}^2 \ <\infty$

\begin{flushleft}
and $H_t ^2 (\mathbb{R}^{k \times d} ) $, the space of  the $ \{\mathcal{F}_t \}_{t \leq s \leq T} $ adapted measurable processes $\phi : \Omega \times [t,T] \longrightarrow\ \mathbb{R}^{k \times d}$ such that  

\end{flushleft} 

$\| \phi \|_{2}^2 = \mathbb{E} \displaystyle \int_t^T { \mid \phi_s \mid}^2 \mathrm{d} s$. \\

These spaces are naturally complete normed spaces. 

The goal of this work is to study the assymptotic behaviour when $\varepsilon \rightarrow 0$ of the Forward Backward Stochastic Differential Equations (FBSDE for short) 
\vspace{-10 pt}
\begin{equation}
\begin{cases}
X_s^{t, \varepsilon, x} = x + \displaystyle \int_t^s f(r, X_r^{t, \varepsilon,x}, Y_r^{t, \varepsilon, x} ) \mathrm{d}r + \sqrt{\varepsilon} \displaystyle \int_t^s \sigma (r, X_r^{t, \varepsilon, x}, Y_r^{t, \varepsilon} ) \mathrm{d}Br \\
Y_s^{t, \varepsilon, x} =   h(X_T^{t,x, \varepsilon}) + \displaystyle \int_s^T g(r, X_r^{t, \varepsilon, x}, Y_r^{t, \varepsilon, x}, Z_r^{t, \varepsilon , x }) \mathrm{d}r -  \displaystyle \int_s^T Z_r^{t, \varepsilon, x }   \mathrm{d}Br \\
x \,\, \in \, \mathbb{R}^d  ;\,\,\, 0 \leq t \leq s \leq T \
\end {cases}
\end{equation}

\vspace{1em} 

\begin{flushleft}
where 
\end{flushleft}

$ f: [0,T] \times \mathbb{R}^d \times \mathbb{R}^k \rightarrow \mathbb{R}^d $

$g: [0, T] \times \mathbb{R}^d \times \mathbb{R}^k \times \mathbb{R}^{k \times d} \rightarrow \mathbb{R}^k$ 

$\sigma : [0,T] \times \mathbb{R}^d \times \mathbb{R}^k \rightarrow \mathbb{R}^{d \times d}$ 

$h: \mathbb{R}^k \rightarrow \mathbb{R}^k $
\newline 
are measurable functions with respect to the  Borelian fields in the Euclidean spaces where they are defined. If we assume the classical regularity assumptions on the coefficients of the FBSDE (that we present in section 2), theorem 1.1 of [5]  ensures that (2.1) has a unique local solution in $\mathcal{M}_t = S_t ^2(\mathbb{R}^d)  \times S_t ^2(\mathbb{R}^k) \times H_t ^2 (\mathbb{R}^{k \times d} )$. The space $\mathcal{M}_t$  is a Banach Space for the natural norm associated to the product topology structure.\newline
 The solution will be denoted   $(X_s^{t, \varepsilon, x}, Y_s^{t, \varepsilon, x},  Z_s^{t, \varepsilon, x} )_{t \leq s \leq T}$, or sometimes only  $(X_s^\varepsilon, Y_s^\varepsilon, Z_s^\varepsilon)_{t \leq s \leq T}$ for simplicity.\\
 In [15] it is shown, with probabilistic techniques, that $u^\varepsilon (t, x)= Y_t^{t,\varepsilon, x}$, which is a deterministic vector by\textit{ Blumenthal's 0-1 Law} ( remark 1.2 of [5] ), is a viscosity solution of the associated quasilinear parabolic partial differential equation 
  \vspace{-5 pt}
  \begin{equation}
 \begin{cases}
 \dfrac{\partial (u^\varepsilon)^l}{\partial t} (t,x) + \displaystyle \dfrac{\varepsilon}{2} \displaystyle \sum_{i=1}^d a_{ij} (t,x, u^\varepsilon (t,x)) \dfrac{\partial^2  (u^\varepsilon)^l}{\partial x_i \partial x_j} (t,x) +\\
  \displaystyle \sum_{i=1}^d f_i^l (t,x, u^\varepsilon (t,x)) \dfrac{\partial (u^\varepsilon)^l}{\partial x_i} (t,x) + \\
 g^l (t,x, u^\varepsilon (t,x), \sqrt{\varepsilon} \nabla_x u^\varepsilon (t,x) \sigma^\varepsilon (t,x, u^\varepsilon (t,x))) = 0 \\
 u^\varepsilon (T,x) = h (x) \\
  \,\, x \in \, \mathbb{R}^d ; \,\, t \in [0,T]; \,\, l=1,...,k
 \end{cases}
 \end{equation}
where $a_{i,j}= (\sigma \sigma^T)_{i,j}$. 

We define below the notion of  viscosity solution for the parabolic system of Partial Differential Equations (PDEs for short) (2.2).

For each $\varepsilon >0$,  consider the following differential operator,
\begin{align*}
 &(L_{l}^\varepsilon\varphi) (t,x,y,z) = \dfrac{\varepsilon}{2}  \displaystyle \sum_{i,j=1}^d a_{ij} (t,x,y) \dfrac{\partial^2 \varphi^l }{\partial x_i \partial x_j} (t,x)
+<f(t,x,y), \nabla \varphi^l (t,x) >  \\
&l=1,...,k \,\,\forall \varphi \in C^{1,2} ( [0,T] \times \mathbb{R}^d, \mathbb{R}^k),\,t\in [0,T],\,(x,y,z) \in \mathbb{R}^d \times \mathbb{R}^k \times \mathbb{R}^{k \times d}
\end{align*}

The space $C^{1,2} ( [0,T] \times \mathbb{R}^d, \mathbb{R}^k)$ is the space of the functions $\phi : [0,T]\times \mathbb{R}^d \rightarrow \mathbb{R}^k$, which are  $C^1$ with respect to the first variable and $C^2$ with respect to the second variable.

The system (2.2) reads 

\begin{equation}
\begin{cases}
\dfrac{\partial (u^\varepsilon)^l}{\partial t } + (L_l^\varepsilon u^{\varepsilon}) (t,x, u^{\varepsilon} (t,x), \nabla_x u^{\varepsilon} (t,x) \sigma (t,x, u^{\varepsilon} (t,x) ) )+ \\
g^l (t,x,u^{\varepsilon} (t,x), \sqrt{\varepsilon} \nabla_x u^{\varepsilon} (t,x) \sigma (t,x, u^{\varepsilon} (t,x)) ) = 0 \\
u^{\varepsilon} (T,x)= h(x), \,\, x \in \mathbb{R}^d, \,\,\, t \in [0,T], l=1,...,k
\end{cases}
\end{equation}

\begin{definition} \textbf{Viscosity solution}\\
Let $u^\varepsilon \in C \big ( [0,T]\times \mathbb{R}^d, \mathbb{R}^k \big )$. The function  $u^\varepsilon$ is said to be a viscosity sub-solution (resp. super-solution) of the system (2.2) if

\begin{center}
$(u^\varepsilon)^l (T,x) \leq h^l (x); \,\, \forall l=1,...,k; \,\, x \in \mathbb{R}^d$
\end{center} 

\begin{center}
( resp. $(u^\varepsilon)^l (T,x) \geq h^l (x); \,\, \forall l=1,...,k; \,\, x \in \mathbb{R}^d$ )
\end{center} 

\begin{flushleft}
and for each $l=1,...,k, (t,x) \in [0,T]\times \mathbb{R}^d, \varphi \in C^{1,2} ( [0,T] \times \mathbb{R}^d, \mathbb{R}^k) $ such that $(t,x)$ is a local minimum (resp. maximum) point of $\varphi - u^\varepsilon$, we have\\
\end{flushleft}

\begin{center}
$\dfrac{\partial \varphi}{\partial t} (t,x) + (L^{\varepsilon}_l \varphi) (t,x,u^\varepsilon (t,x), \sqrt{\varepsilon} \nabla_x \varphi (t,x) \sigma (t,x,u^\varepsilon (t,x))) +  \vspace{2 pt} 
 g^l (t,x, u^\varepsilon (t,x), \sqrt{\varepsilon} \nabla_x \varphi (t,x) \sigma (t,x,u^\varepsilon (t,x)))\geq 0; l=1,...,k$\\
 (resp. $\leq 0).$
\end{center}

The function $u^\varepsilon$ is said to be a viscosity solution of the system (2.2) if $u^\varepsilon$ is both a viscosity sub-solution and a viscosity super-solution of this system.
\end{definition}

Under more restrictive assumptions (that we present in Section 3), we shall have 
\begin{equation}
u^\varepsilon (t,x) = Y_t^{t, \varepsilon, x}
\end{equation}
and $u^\varepsilon$ will be  actually a classical solution of (2.2). This can be proved using the \textit{Four Step Scheme Methodology} of \textit{Ma-Protter-Young} [14] with the help of \textit{Ladyzhenskaja's} work in quasilinear parabolic PDEs [12].
If $f(s,x,y) = y$,  $k=d$, and $a_{i,j}= (\sigma \sigma^T)_{i,j} = I_{d \times d}$,  the system (2.2) becomes a Backward Burgers Equation 

\begin{equation}
 	\begin{cases}
 		\dfrac{\partial u^{\varepsilon } }{\partial t} + \dfrac{\varepsilon}{2} \Delta u^{\varepsilon}+ u^{\varepsilon} \nabla u^{\varepsilon} + g^\varepsilon = 0
 		\\
 		u^\varepsilon(T,x) = h(x) \\
 		 x \in \mathbb{R}^d
 	\end{cases}
 \end{equation}
 
 If $u^\varepsilon$ solves the problem above, $v^\varepsilon (t,x) = - u^\varepsilon (T-t, x)$ solves a Burgers Equation
 
 \begin{equation}
 	\begin{cases}
 		\dfrac{\partial v^{ \varepsilon } }{\partial t} - \dfrac{\varepsilon}{2} \Delta v^{\varepsilon}+ v^{\varepsilon} \nabla v^{\varepsilon} + g^\varepsilon = 0
 		\\
 		v^\varepsilon(0,x) = -h(x) \\
 		 x \in \mathbb{R}^d
 	\end{cases}
 \end{equation}
 which is an important simplified model for turbulence and describes the motion of a compressible fluid with a viscosity parameter $\dfrac{\varepsilon}{2}$ under the influence of an external force $g^\varepsilon$. \\
 The reference [21] is an example in the deterministic PDE literature where the behaviour of the above evolution problem when $\varepsilon\rightarrow0$ is studied. It is known (cf [14]) that when $G$ is an open  bounded set of $\mathbb{R}^3$ with a smooth boundary $\partial G$, if $g \, \in L^{\infty}(0,T; L(G))$, there exists $u^\varepsilon$ unique solution of a Burgers forward equation in the intersection of the functional spaces $L^{\infty}(0,T; L(G)) \cap\, L^2(0,T; W_0^{1,2}(G))$. When $g$ is in $ L^{\infty}(0,T;W_0^{3,2}(G))$, then $u^\varepsilon$ will converge in $L^2(G)$ to the corresponding solution of the limiting equation on a small non-empty time interval as the viscosity parameter goes to zero [21].\\
 One of our intentions is to study probabilistically (2.2), using the connection with the  Forward Backward Stochastic Differential Equation (2.1), with the link provided via the identification
 \begin{equation}
 u^\varepsilon (t,x) = Y_t^{t, \varepsilon, x}
 \end{equation}
 In [9] and [18], it is shown that a Large Deviation Principle  holds for the couple $(X_s^{t, \varepsilon, x}, Y_s^{t, \varepsilon, x})_{t \leq s \leq T}$ in the case where the FBSDE (2.1) is decoupled, namely, when $f = f(s,x)$ and $\sigma = \sigma (s,x)$.
 But the fundamental a-posteriori property 
 \begin{equation}
 Y_s^{t, \varepsilon, x} (\omega) = u^\varepsilon (s, X_s^{t, \varepsilon, x} (\omega) ) , s \in [t,T] \,\, a.s
 \end{equation}
(see corollary 1.5 of [5]) 
turns  the forward equation of (2.1) in an equation which only depends on $(X_s^{t, \varepsilon, x})_{t \leq s \leq T}$. With the work [1] on the so-called Freidlin-Wentzell Theory, which deals with diffusions given by
\begin{equation}
\begin{cases}
dX_t^{\varepsilon} = b^\varepsilon (t, X_t^{\varepsilon}) dt + \sqrt{\varepsilon} \sigma^\varepsilon (t, X_t^{\varepsilon}) dB_t \\
X_0^\varepsilon = X_0
\end{cases}
\end{equation}
 with both the drift and the diffusion coefficient depending on $\varepsilon$ , we are able to show that (2.1) obeys a Large Deviation Principle even in the coupled case, thus generalizing the results presented in [18].  We mention the work [10] where the authors do the asymptotic study of FBSDES with generalized Burgers type nonlinearities.
 
 The main interest of this method is that we do not use PDE results to study the asymptotic behaviour of the Forward Backward Stochastic Differential System (2.1). In particular we have the possibility to study with probabilistic techniques the behaviour of a Burgers type equation when the viscosity parameter converges to zero.  The other important point is   the establishment of a Large Deviation Principle to the laws of the Forward and Backward Processes, even when the equations are coupled, unlike [9] and [18].
  
\section{The Asymptotic Behaviour when $\varepsilon \rightarrow 0$}

In order to establish our results, we make the following set of assumptions.
\begin{flushleft}
 \textbf{Assumption A}
\end{flushleft}

We say $f,g, \sigma, $ and $h$ satisfy \textit{(A.A)} if there exists two constants $L, \Lambda >0$ such that:
\begin{flushleft}
\textit{(A.1)}
  $\forall\, t \in [0,T]$, $\forall \, (x,y,z), (\overline{x}, \overline{y}, \overline{z}) \in \mathbb{R}^d \times \mathbb{R}^k \times \mathbb{R}^{k \times d}$:
\end{flushleft}
  
\vspace{-20 pt} 
 
\begin{flushleft}
$ \mid f(t,x,y) - f(t,\overline{x}, \overline{y} )  \mid \leq L ( \mid x - \overline{x} \mid + \mid y - \overline{y} \mid )$ 

\vspace{3 pt}

 $\mid g(t,x,y,z) - g(t,\overline{x},\overline{y}, \overline{z}) \mid \leq L (\mid x - \overline{x} \mid + \mid y - \overline{y} \mid  + \mid z - \overline{z} \mid  )$ 
 
 \vspace{3 pt}
 
 $\mid h(x) - h(\overline{x}) \mid \leq L (\mid x - \overline{x} \mid) $ \\

\vspace{3 pt}

 $\mid \sigma (t,x,y) - \sigma (t,\overline{x}, \overline{y}) \mid  \leq L (\mid x - \overline{x} \mid  + \mid y - \overline{y} \mid   ) $ 
 \end{flushleft}

 \vspace{1 pt}
 
 \begin{flushleft}
\textit{(A.2)}
  $\forall\, t \in [0,T]$, $\forall \, (x,y,z), (\overline{x}, \overline{y}, \overline{z}) \in \mathbb{R}^d \times \mathbb{R}^k \times \mathbb{R}^{k \times d}$:
\end{flushleft}
  
  \vspace{-20 pt} 
  
 \begin{flushleft}
 < $\textit{x} - \overline{x},  f(t,x,y) - f(t,\overline{x},y) > \leq L \mid x - \overline{x} \mid^2$\\
 
  \vspace{3 pt}
 
< $\textit{y} - \overline{y},  f(t,x,y) - f(t,x, \overline{y} ) > \leq L \mid y - \overline{y} \mid^2$\\
 \end{flushleft}
 
 \vspace{1pt}
 
 \begin{flushleft}
\textit{(A.3)}
  $\forall\, t \in [0,T]$, $\forall \, (x,y,z)\in \mathbb{R}^d \times \mathbb{R}^k \times \mathbb{R}^{k \times d}$:
\end{flushleft}
  
  \vspace{-20 pt} 
  
  \begin{flushleft}
$ \mid f(t,x,y) \mid \leq \Lambda (1 + \mid x \mid + \mid y \mid )$ \\
$\mid g(t,x,y,z) \mid \leq \Lambda (1 + \mid x \mid + \mid y \mid + \mid z \mid) $\\
$\mid \sigma (t,x,y) \mid \leq \Lambda (1 + \mid x \mid + \mid y \mid)$\\
$\mid h(x) \mid \leq \Lambda (1 + \mid x \mid ) $\\
  \end{flushleft}
  
   \vspace{1 pt}

\begin{flushleft}
\textit{(A.4)}
  $\forall\, t \in [0,T]$, $\forall \, (x,y,z)\in \mathbb{R}^d \times \mathbb{R}^k \times \mathbb{R}^{k \times d}$:
\end{flushleft}  

\vspace{-20 pt}

\begin{flushleft}
$u \mapsto f (t, u, y) $\\
$v \mapsto g(t,x,v,z)$ \,\, are\,\, continuous \,\, mappings.
\end{flushleft}

Under this set of hypothesis, \textit{theorem 1.1} of [5] ensures that there exists a constant $C=C(L) >0$ , depending only on $L$, such that for every $T \leq C$ \textit{(1.1)} admits a unique solution in $\mathcal{M}_t$.

Moreover, using \textit{theorem 2.6} of [5] , we have
\begin{equation}
 \mathbb{P } \big ( \forall \, s \in [t,T] : u^\varepsilon (s, X_s^{t, \xi} ) = Y_s^{t, \xi} \big ) = 1
 \end{equation}
 \begin{equation}
 \mathbb{P} \otimes \mu \big (  \big \{ (\omega, s) \in \Omega \times [t,T]  : \mid Z_s^{t, \varepsilon, x } (\omega) \mid \geq \Gamma_1 \big  \}  \big )
 \end{equation}
 
\begin{flushleft}
 where $\mu$ stands for the Lebesgue measure in the real line and $\Gamma_1$ is a constant which only depends on $L, \Lambda, k, d,T$. 
\end{flushleft}
  By remark 2.7 of [5] we have that, for each $\varepsilon > 0$,  $u^\varepsilon$  only depends on the coefficients of the system (2.1), $f,g, \sqrt{\varepsilon}\sigma, h$. The fact that the dependence of $\Gamma_1 = \Gamma_1 (L, \Lambda, k,d,T)$ determines that the properties (3.1)-(3.2)  above hold uniformly in $\varepsilon$; in particular, there exist continuous versions and uniformly bounded (in $\varepsilon$ too) of $(Y_s^{t, \varepsilon, x},  Z_s^{t, \varepsilon, x} )_{t \leq s \leq T}$.
  
  We  now assume more regularity on the coefficients of (1.1) in the next set of assumptions.
\begin{flushleft}
 \textbf{Assumption B} 
\end{flushleft}

For $T \leq C$, we say that $f,g,h, \sigma$ satisfy $(A.B)$ if there exists three \\ constants $\lambda$, $\Lambda$ $\gamma >0 $

\vspace{-10 pt}

\begin{flushleft}
\textit{(B.1)}
  $\forall\, t \in [0,T]$, $\forall \, (x,y,z)\in \mathbb{R}^d \times \mathbb{R}^k \times \mathbb{R}^{k \times d}$:
\end{flushleft}
  
  \vspace{-20 pt} 
  
  \begin{flushleft}
$ \mid f(t,x,y) \mid \leq \Lambda (1 + \mid y \mid )$ \\
$\mid g(t,x,y,z) \mid \leq \Lambda (1 + \mid y \mid + \mid z \mid) $\\
$\mid \sigma (t,x,y) \mid \leq \Lambda$\\
$\mid h(x) \mid \leq \Lambda $ \\
  \end{flushleft}
  
  \vspace{1 pt}

\begin{flushleft}
\textit{(B.2)}
   $\forall \, (t,x,y)\in [0,T] \times \mathbb{R}^d \times \mathbb{R}^k \times \mathbb{R}^{k \times d}$:\end{flushleft}
   
   \vspace{-20 pt}
   
   \begin{flushleft}
   $<\xi, a(t,x,y) \xi> \geq \lambda  \mid \xi \mid^2 \,\,\, \forall \,\, \xi \in \mathbb{R}^d
 \,\,$where$\,\, a(t,x,y) = \sigma \sigma^T (t,x,y).$\\
   \end{flushleft}

\vspace{1 pt}

\begin{flushleft}
\textit{(B.3)} the function $\sigma$ is continuous on its definition set. \end{flushleft} 

\vspace{1 pt}

\begin{flushleft}
\textit{(B.4)} The function $\sigma$ is differentiable with respect to $x$ and $y$ and its derivatives with respect to $x$ and $y$ are $\gamma$- H\"{o}lder in $x$ and $y$, uniformly in $t$. 
\end{flushleft}

\vspace{1 pt}

Under the set of assumptions \textit{(A.A)} and \textit{(A.B)},  using \textit{proposition 2.4} and \textit{proposition B.6} of [5] and \textit{theorem 2.9} of [6], one can prove that there exists two constants, $\kappa, \kappa_1$,  only depending on $L, \Lambda, d,k,T$ (independent of $\varepsilon$) such that :
\begin{equation}
\mid u^\varepsilon (t,x) \mid \leq \kappa 
\end{equation}
\begin{equation}
u^\varepsilon \, \in C_b^{1,2} ([0,T] \times \mathbb{R}^d )
\end{equation}
\begin{equation}
\displaystyle \sup_{(t,x) \in [0,T] \times \mathbb{R}^d} \mid \nabla_x u^\varepsilon (t,x) \mid \leq \kappa_1 
\end{equation}
and

\vspace{-19 pt}

\begin{equation}
Z_s^{t, \varepsilon, x} = \sqrt{\varepsilon} \nabla_x u^\varepsilon (t, X_s^{t, \varepsilon, x}) \sigma (s, X_s^{t, \varepsilon}, Y_s^{t, \varepsilon, x} )
\end{equation}
where $u^\varepsilon$ solves uniquely (2.2) in $C_b^{1,2} ([0,T] \times \mathbb{R}^d)$, the space of $C^1$ functions with respect to the first variable and $C^2$ with respect to the second  variable, with bounded derivatives.\\
All these facts can be proven probabilistically. The last claim and the proper-\\ties (3.3), (3.4) and (3.6) are proved in [5]. Delarue delivers in the appendices of [5] probabilistic methods to obtain these regularity results, under assumptions (A.A) and (A.B) and over a small time enough duration, using \textit{Malliavin} Calculus techniques. The estimate of the gradient (3.5) is establi-\\shed by a probabilistic scheme in [6], using a variant of the \textit{Malliavin-Bismut} integration by parts formula proposed by Thalmaier [19] and applied in [20] to establish a gradient estimate of interior type for the solutions of a linear elliptic equation on a manifold. 

\begin{theorem} {\textbf{Assymptotic Behaviour when $\varepsilon \rightarrow 0$.}} 
\newline
Under the previous assumptions\textit{ (A)}, and \textit{ (B)}, assuming further $T \leq C$, where $C= C(L, \gamma)$ depending on the Lipschitz constant $L$ and on  a constant $\gamma$ that appears in the middle of the proof  (see 3.13) we have

\begin{flushleft}
1. For each $s,t  \in [0,T]$, $t \leq s$, the solution  of (2.1), $(X_s^{t, \varepsilon, x}, Y_s^{t, \varepsilon, x}, Z_s^{t, \varepsilon, x})\newline_{t \leq s \leq T}$ converges in $\mathcal{M}_t$, when $\varepsilon \rightarrow 0$ to $(X_s, Y_s, 0)_{t \leq s \leq T}$, where  $(X_s, Y_s)_{t \leq s \leq T}$ solves the  coupled system of differential equations:
\end{flushleft}

\begin{equation}
\begin{cases}
\dot{X}_s = f (s, X_s, Y_s) \\
\dot{Y}_s = -g (s, X_s, Y_s, 0), \,\, t \leq s \leq T\\ 
X_t = x ,
Y_T  = h(X_T) 
\end{cases}
\end{equation}

\begin{flushleft}
2.  Denoting $u(t,x) = Y_t^{t,x}$ , the limit in $\varepsilon \rightarrow 0$ of $Y_t^{t, \varepsilon,x}$, the function $u$ is a viscosity solution of
\end{flushleft}

\vspace{-30 pt}

\begin{equation}
\begin{cases}
\dfrac{\partial u^l}{\partial t}  + \displaystyle \sum_{i=1}^d f_i (t,x, u(t,x)) \dfrac{\partial u^l}{\partial x_i} (t,x) + 
 g^l (t,x, u(t,x), 0) = 0 \\
 u^\varepsilon (t,x) = h (x); \,\, x \in \, \mathbb{R}^d ; \,\, t \in [0,T] ; \,\,
 l=1,...,k
\end{cases}
\end{equation}

\vspace{-20 pt}

\begin{flushleft}
3. The function $u$ is bounded, continuous Lipschitz in $x$ and uniformly continuous in time.

\end{flushleft}

\begin{flushleft}
4. Furthermore,  if $u \in C_b ^{1,1} ([0,T] \times \mathbb{R}^d)$, since  that (3.7) has a unique continuous solution, due to the assumptions \textit{A} and \textit{B}, the function $u$ is a classical  solution of (3.8). 
\end{flushleft}

\end{theorem}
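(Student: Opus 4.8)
The plan is to carry everything through the decoupling field $u^\varepsilon(t,x)=Y^{t,\varepsilon,x}_t$ and the identity $Y^{t,\varepsilon,x}_s=u^\varepsilon(s,X^{t,\varepsilon,x}_s)$ a.s., using the uniform estimates (3.3)--(3.6) and assumption (B.1). First, (3.6) together with $|\sigma|\le\Lambda$ gives $|Z^{t,\varepsilon,x}_s|\le\sqrt{\varepsilon}\,\kappa_1\Lambda$ pointwise, hence $\|Z^\varepsilon\|_2^2\le\varepsilon\,\kappa_1^2\Lambda^2T\to0$; and since $|Y^\varepsilon|\le\kappa$, the coefficients $f(\cdot,X^\varepsilon,Y^\varepsilon)$ and $\sigma(\cdot,X^\varepsilon,Y^\varepsilon)$ are bounded by $\Lambda(1+\kappa)$ and $\Lambda$, so $X^\varepsilon$ stays, in $S^2_t$, in a bounded region uniformly in $\varepsilon$. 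Next I would show $u^\varepsilon$ is equicontinuous in $t$ uniformly in $\varepsilon$: taking expectations in the backward equation (the martingale term drops, $Y^{t,\varepsilon,x}_t$ is deterministic) gives $u^\varepsilon(t,x)=\mathbb{E}[u^\varepsilon(s,X^{t,\varepsilon,x}_s)]+\mathbb{E}\int_t^s g\,dr$, and with $\mathbb{E}|X^{t,\varepsilon,x}_s-x|\le\Lambda(1+\kappa)(s-t)+\sqrt{\varepsilon}\,\Lambda\sqrt{s-t}$, the Lipschitz bound (3.5), and $|g|\le\Lambda(1+\kappa+\sqrt{\varepsilon}\,\kappa_1\Lambda)$, one gets $|u^\varepsilon(t,x)-u^\varepsilon(s,x)|\le\omega(|s-t|)$ with a modulus $\omega$ independent of $\varepsilon$ and $x$. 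Together with (3.3) and (3.5), Arzel\`{a}--Ascoli and a diagonal extraction over an exhaustion by compacts produce a subsequence $\varepsilon_n\downarrow0$ with $u^{\varepsilon_n}\to u$ locally uniformly, $u$ bounded by $\kappa$, $\kappa_1$-Lipschitz in $x$ and of modulus $\omega$ in $t$ --- which is assertion 3.

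For assertion 1, let $X$ solve $\dot X_s=f(s,X_s,u(s,X_s))$, $X_t=x$, well posed since $x\mapsto f(s,x,u(s,x))$ is Lipschitz. Using the identity $Y^{\varepsilon_n}=u^{\varepsilon_n}(\cdot,X^{\varepsilon_n})$ in the forward equation, subtracting, and splitting $|u^{\varepsilon_n}(r,X^{\varepsilon_n}_r)-u(r,X_r)|\le|u^{\varepsilon_n}(r,X^{\varepsilon_n}_r)-u(r,X^{\varepsilon_n}_r)|+\kappa_1|X^{\varepsilon_n}_r-X_r|$, one bounds the first term via local uniform convergence and the uniform moment bound on $X^{\varepsilon_n}$, the drift difference via Lipschitzness of $f$, the martingale term via Burkholder--Davis--Gundy (it is $O(\sqrt{\varepsilon_n})$ as $\sigma$ is bounded), and closes by Gronwall to get $X^{\varepsilon_n}\to X$ in $S^2_t$. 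The same splitting gives $Y^{\varepsilon_n}_s=u^{\varepsilon_n}(s,X^{\varepsilon_n}_s)\to u(s,X_s)=:Y_s$ in $S^2_t$, and passing to the limit in the backward equation (the stochastic integral vanishes in $L^2$; $h$ and $g(\cdot,\cdot,\cdot,0)$ pass by continuity and dominated convergence; $Z^{\varepsilon_n}\to0$) yields $Y_s=h(X_T)+\int_s^T g(r,X_r,Y_r,0)\,dr$: so $(X,Y)$ solves (3.7), $(X^{\varepsilon_n},Y^{\varepsilon_n},Z^{\varepsilon_n})\to(X,Y,0)$ in $\mathcal{M}_t$, and $u(t,x)=Y_t=Y^{t,x}_t$, matching the definition in assertion 2. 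To pass from the subsequence to the full family I would prove the limit unique: $u$ solves the fixed-point relation $u(s,X^{t,x}_s)=h(X^{t,x}_T)+\int_s^T g(r,X^{t,x}_r,u(r,X^{t,x}_r),0)\,dr$ along the characteristics $X^{t,x}$ driven by $f(\cdot,\cdot,u)$, and a contraction argument on the complete space of bounded functions that are Lipschitz in $x$ with the constant $\gamma$ of (3.13) shows this relation has a unique solution when $T\le C(L,\gamma)$; equivalently, (3.7) is a well-posed two-point boundary value problem on $[0,T]$. Every subsequential limit solves it, so all limits coincide and $(X^\varepsilon,Y^\varepsilon,Z^\varepsilon)\to(X,Y,0)$ as $\varepsilon\to0$. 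This contraction, with the ensuing smallness of $T$, is the main technical obstacle.

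For assertion 2 I would argue probabilistically. Fix $l$, $(t_0,x_0)$ and $\varphi\in C^{1,2}([0,T]\times\mathbb{R}^d,\mathbb{R}^k)$ with $(t_0,x_0)$ a local minimum of $\varphi-u$. Along the characteristic $X^{t_0,x_0}$ one has $u(s,X^{t_0,x_0}_s)=Y^{t_0,x_0}_s$, so $s\mapsto(\varphi^l-u^l)(s,X^{t_0,x_0}_s)$ has a local minimum at $s=t_0$; writing $\varphi^l(s,X^{t_0,x_0}_s)-\varphi^l(t_0,x_0)=\int_{t_0}^s\big(\partial_t\varphi^l+\langle f(r,X^{t_0,x_0}_r,u(r,X^{t_0,x_0}_r)),\nabla_x\varphi^l\rangle\big)(r,X^{t_0,x_0}_r)\,dr$ and $u^l(s,X^{t_0,x_0}_s)-u^l(t_0,x_0)=Y^{l,t_0,x_0}_s-Y^{l,t_0,x_0}_{t_0}=-\int_{t_0}^s g^l(r,X^{t_0,x_0}_r,u(r,X^{t_0,x_0}_r),0)\,dr$, dividing by $s-t_0>0$ and letting $s\downarrow t_0$ gives $\partial_t\varphi^l(t_0,x_0)+\langle f(t_0,x_0,u(t_0,x_0)),\nabla_x\varphi^l(t_0,x_0)\rangle+g^l(t_0,x_0,u(t_0,x_0),0)\ge0$ --- the sub-solution inequality of Definition 2.1 (the $z$-slot is $0$ since $\sqrt{\varepsilon}\nabla_x\varphi\,\sigma\to0$, and the second-order term is absent); the super-solution inequality is symmetric at a local maximum, and the terminal conditions follow from $u(T,\cdot)=h$. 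Alternatively this is the Barles--Perthame stability of viscosity solutions applied to $u^{\varepsilon_n}\to u$, since the operator in (2.2) converges locally uniformly to that of (3.8): the second-order term is $O(\varepsilon)$ as $|a|\le\Lambda^2$, and $|g(\cdot,\cdot,\cdot,\sqrt{\varepsilon}\,p\,\sigma)-g(\cdot,\cdot,\cdot,0)|\le L\sqrt{\varepsilon}\,\Lambda|p|$.

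Assertion 4 is then immediate: if $u\in C_b^{1,1}([0,T]\times\mathbb{R}^d)$, then, (3.8) being first order, taking $\varphi=u$ as test function in the two viscosity inequalities collapses them to the pointwise identity (3.8), so $u$ is a classical solution; and since (3.7) has a unique continuous solution --- equivalently, the decoupling field is the unique solution of the fixed-point relation above --- this classical solution is the only one.
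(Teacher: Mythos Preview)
Your proof is correct but follows a different route from the paper, chiefly for assertion~1. The paper proves convergence by a direct Cauchy estimate: for two parameters $\varepsilon,\varepsilon_1$, It\^o's formula and Burkholder--Davis--Gundy on $|X^\varepsilon-X^{\varepsilon_1}|^2$ and $|Y^\varepsilon-Y^{\varepsilon_1}|^2$, combined with the Lipschitz assumptions and the pointwise bound $|Z^\varepsilon|\le\sqrt{\varepsilon}\,\kappa_1\Lambda$ from (3.6), yield after Gronwall an inequality of the form $\|X^\varepsilon-X^{\varepsilon_1}\|_\infty^2+\|Y^\varepsilon-Y^{\varepsilon_1}\|_\infty^2\le C\,|\sqrt{\varepsilon}-\sqrt{\varepsilon_1}|^2$, so the full family is Cauchy in $\mathcal M_t$ and the limit is identified by passing to the limit in the equations. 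You instead argue by compactness: establish equicontinuity of $u^\varepsilon$ (the paper obtains this from a general FBSDE continuity estimate, (3.31), via corollary~1.4 of [5]; your derivation straight from the backward equation is cleaner here), extract a subsequential limit $u$ by Arzel\`a--Ascoli, show $X^{\varepsilon_n}\to X$ via the decoupled forward equation, and then need \emph{uniqueness} of the limit forward--backward ODE (3.7) to upgrade from the subsequence to the full family. The paper's route avoids that extra uniqueness step entirely; your route is more modular and makes the role of the decoupling field explicit, at the price of a separate contraction argument for (3.7), which is where the smallness $T\le C(L,\gamma)$ enters in your scheme. For assertion~2 the paper simply invokes stability of viscosity solutions under locally uniform convergence (citing~[4]); your direct argument along the characteristic $s\mapsto X^{t_0,x_0}_s$ is more elementary and specific to the first-order limit, and succeeds because for a first-order operator the test along that single curve already delivers the full viscosity inequality. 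Assertion~4 is handled essentially the same way in both proofs.
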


\begin{proof}
 Given $\varepsilon, \varepsilon_1 > 0$, $x \in \mathbb{R}^d$, for $T \leq C$, if  $(X_s^{t, \varepsilon, x}, Y_s^{t, \varepsilon, x}, Z_s^{t, \varepsilon, x})_{t \leq s \leq T}$ is the unique solution in $\mathcal{M}_t$ of
 
 \vspace{-10 pt}
 
 \begin{equation}
\begin{cases}
X_s^{t, \varepsilon, x} = x + \displaystyle \int_t^s f(r, X_r^{t, \varepsilon,x}, Y_r^{t, \varepsilon, x} ) \mathrm{d}r + \sqrt{\varepsilon} \displaystyle \int_t^s \sigma (r, X_r^{t, \varepsilon, x}, Y_r^{t, \varepsilon, x} ) \mathrm{d}Br \\
Y_s^{t, \varepsilon, x} =   h(X_T^{t,\varepsilon, x}) + \displaystyle \int_s^T g(r, X_r^{t, \varepsilon, x}, Y_r^{t, \varepsilon, x}, Z_r^{t, \varepsilon , x }) \mathrm{d}r -  \displaystyle \int_s^T Z_r^{t, \varepsilon, x }   \mathrm{d}Br \\
x \,\, \in \, \mathbb{R}^d; \,\,\, 0 \leq t \leq s \leq T \
\end {cases}
\end{equation}
 
\begin{flushleft}
and 
 $(X_s^{t, \varepsilon_1, x}, Y_s^{t, \varepsilon_1, x}, Z_s^{t, \varepsilon_1, x})_{t \leq s \leq T}$ is the unique solution in $\mathcal{M}_t$ of
\end{flushleft}
 
 \vspace{-5 pt}
 \begin{equation}
\begin{cases}
X_s^{t, \varepsilon_1, x} = x + \displaystyle \int_t^s f(r, X_r^{t, \varepsilon_1,x}, Y_r^{t, \varepsilon_1, x} ) \mathrm{d}r + \sqrt{\varepsilon_1} \displaystyle \int_t^s \sigma (r, X_r^{t, \varepsilon_1, x}, Y_r^{t, \varepsilon_1, x} ) \mathrm{d}Br \\
Y_s^{t, \varepsilon_1, x} =   h(X_T^{t, \varepsilon_1, x}) + \displaystyle \int_s^T g(r, X_r^{t, \varepsilon_1, x}, Y_r^{t, \varepsilon_1, x}, Z_r^{t, \varepsilon_1 , x }) \mathrm{d}r -  \displaystyle \int_s^T Z_r^{t, \varepsilon_1, x }   \mathrm{d}Br \\
x \,\, \in \, \mathbb{R}^d;  \,\,\, 0 \leq t \leq s \leq T \
\end {cases}
\end{equation}

\begin{flushleft}
\begin{flushleft}
Write, for sake of simplicity on the notations, with $\delta = \varepsilon, \varepsilon_1$ \\

\end{flushleft}
\end{flushleft}

\vspace{-10 pt}

\begin{flushleft}
$ f^\delta (s) = f(s, X_s^{t, \delta, x}, Y_s^{t, \delta, x})$ \\
 $g^\delta (s) = g (s, X_s^{t, \delta, x}, Y_s^{t, \delta, x}, Z_s^{t, \delta, x} )$\\
 $\sigma^\delta (s) = \sqrt{ \delta } \sigma (s, X_s^{t, \delta, x} , Y_s^{t, \delta, x})$ \\
 $h^\delta = h (X_T^{t, \delta, x})$\\ 
 \end{flushleft}

\vspace{-20 pt}

\begin{flushleft}
  As in the proof of \textit{theorem 1.2} of [5], using\textit{ It\^{o}'s Formula}:
\end{flushleft} 

 $\mathbb{E}   \displaystyle \sup_{t \leq s \leq T} \mid X^{\varepsilon}_s - X_s^{\varepsilon_1} \mid^2 \leq  \mathbb{E} \displaystyle \int_t^T   \mid \sigma^\varepsilon (s) - \sigma^{\varepsilon_1} (s) \mid^2 \mathrm{d}s$
 
  $+ 2 \mathbb{E}\, \displaystyle \sup_{t \leq s \leq T} \displaystyle \int_t^s \, <X^{\varepsilon}_r - X^{\varepsilon_1}_r, f^\varepsilon (r)- f^{\varepsilon_1} (r)> \mathrm{d}r  $
  
\vspace{-10 pt}  
  
  \begin{equation}
 + 2 \mathbb{E} \, \displaystyle \sup_{t \leq s \leq T} \displaystyle \int_t^s \,<X^{\varepsilon}_r - X^{\varepsilon_1}_r, (\sigma^\varepsilon (r) - \sigma^{\varepsilon_1} (r) ) \mathrm{d}Br> 
 \end{equation}
 
 \vspace{-1 pt}
Using \textit{Burkholder-Davis-Gundy's inequalities}, there exists $\gamma > 0$ such that:

 $\mathbb{E}   \displaystyle \sup_{t \leq s \leq T} \mid X^{\varepsilon}_s - X_s^{\varepsilon_1} \mid^2 \leq  \mathbb{E} \displaystyle \int_t^T   \mid \sigma^\varepsilon (s) - \sigma^{\varepsilon_1} (s)  \mid^2 \mathrm{d}s$

$+ 2\mathbb{E} \, \displaystyle \sup_{t \leq s \leq T} \displaystyle \int_t^s \,  <X_r^{\varepsilon} - X^{\varepsilon_1}_r, f^\varepsilon (r)- f^{\varepsilon_1} (r)> \mathrm{d}r$ 

 \vspace{-10 pt}

\begin{equation}
 + 2 \gamma  \mathbb{E}\, \Big ( \displaystyle \int_t^T \,\mid X^{\varepsilon}_r - X^{\varepsilon_1}_r \mid^2  \mid \sigma^\varepsilon (r)- \sigma^{\varepsilon_1} (r) \mid^2 \mathrm{d}r \Big ) ^{1/2}
\end{equation}

\begin{flushleft}
where $\gamma > 0$ only depends on the Lipschitz constant $ L$ and  $T$.
\end{flushleft}

\vspace{-5 pt}

\begin{flushleft}
Using the Lipschitz property (A.1)  , there exists $\gamma >0$,  eventually different, but only dependent on $L,T$ such that:
\end{flushleft}

$\mathbb{E} \displaystyle \sup_{t \leq s \leq T} \mid X^{\varepsilon}_s - X^{\varepsilon_1}_s \mid^2 
\leq \gamma 
\Big \{  \mathbb{E} \displaystyle \int_t^T  ( \mid X^{\varepsilon}_s  - X^{\varepsilon_1}_s \mid^2 +  \mid Y^{\varepsilon}_s  - Y^{\varepsilon_1}_s \mid^2 ) \mathrm{d}s  $

$+  \mathbb{E} \displaystyle \int_t^T   \mid \sigma^{\varepsilon} (s)  - \sigma^{\varepsilon_1} (s) \mid^2  \mathrm{d}s + 2 \mid \sqrt{\varepsilon} - \sqrt{\varepsilon_1} \mid^2  $

\vspace{-10 pt}

\begin{equation}
+  2 \sqrt{T} \displaystyle \sup_{t \leq s \leq T} \mid X^{\varepsilon}_s - X^{\varepsilon_1}_s \mid^2   \Big \}
\end{equation}

\begin{flushleft}
Then, assuming that $1- 2 \gamma \sqrt{T} \geq 0$
\end{flushleft}

$(1- 2 \gamma \sqrt{T} )\displaystyle \sup_{t \leq s \leq T} \mid X^{\varepsilon}_s - X^{\varepsilon_1}_s \mid^2 \\
 \leq \gamma \Big \{ \mathbb{E}   \displaystyle \int_t^T  ( \mid X^{\varepsilon}_s  - X^{\varepsilon_1}_s \mid^2 +  \mid Y^{\varepsilon}_s  - Y^{\varepsilon_1}_s \mid^2 ) \mathrm{d}s +  2 \mid \sqrt{\varepsilon} - \sqrt{\varepsilon_1} \mid^2     $

\vspace{-10 pt}

\begin{equation}
+ \mathbb{E}  \displaystyle \int_t^T   \mid \sigma^{\varepsilon} (s)  - \sigma^{\varepsilon_1} (s) \mid^2  \mathrm{d}s \Big \}
\end{equation}

\begin{flushleft}
 Using   estimates as before, we get for some new $\gamma \geq 0$ eventually:
\end{flushleft}

$\mathbb{E} \Big (  \displaystyle \sup_{t \leq s \leq T} \mid Y^{\varepsilon}_s - Y^{\varepsilon_1}_s \mid^2 \Big )  +  \mathbb{E}  \Big ( \displaystyle \int_t^T \mid Z^{\varepsilon}_s - Z^{\varepsilon_1}_s\mid^2 \mathrm{d}s \Big )$ 

$\leq \gamma  \Big [ \mathbb{E} \mid h^\varepsilon - h^{\varepsilon_1} \mid^2 + \mathbb{E} \Big ( \displaystyle \sup_{t \leq s \leq T} \displaystyle \int_t^T < Y^\varepsilon_s - Y^{\varepsilon_1}_s,  g^\varepsilon(s) - g^{\varepsilon_1}(s)> \mathrm{d}s \Big ) \Big ] $

\begin{flushleft}
Using  (3.13)  and the assumption (A.1)   modifying $\gamma$ eventually:
\end{flushleft}

$\mathbb{E}   \displaystyle \sup_{t \leq s \leq T} \mid X^{\varepsilon}_s - X^{\varepsilon_1}_s \mid^2 + \mathbb{E}   \displaystyle \sup_{t \leq s \leq T} \mid Y^{\varepsilon}_s - Y^{\varepsilon_1}_s \mid^2  +  \mathbb{E}   \displaystyle \int_t^T \mid Z^{\varepsilon}_s - Z^{\varepsilon_1}_s \mid^2 \mathrm{d}s $ 

$\leq \gamma \Big \{  \mathbb{E}  \mid h^{\varepsilon} - h^{\varepsilon_1} \mid^2 + \mathbb{E} \displaystyle \int_t^T  \mid X^{\varepsilon}_s - X^{\varepsilon_1}_s \mid^2 \mathrm{d}s  +  \mathbb{E} \displaystyle \int_t^T  \mid Y^{\varepsilon}_s - Y^{\varepsilon_1}_s \mid^2 \mathrm{d}s $

$+ \mathbb{E} \Big ( \displaystyle \int_t^T  (\mid f^\varepsilon (s) - f^{\varepsilon_1} (s) \mid + \mid g^{\varepsilon}(s) - g^{\varepsilon_1} (s)  \mid  )\mathrm{d}s \Big )^2 $

$+ \mathbb{E} \displaystyle \int_t^T \mid \sigma^{\varepsilon} (s) - \sigma^{\varepsilon_1} (s) \mid^2  \mathrm{d}s  + \mid \sqrt{\varepsilon} - \sqrt{\varepsilon_1} \mid^2 $

+ $\mathbb{E} \displaystyle \int_t^T  \mid Z^{\varepsilon}_s - Z^{\varepsilon_1}_s \mid ( \mid Y^{\varepsilon}_s - Y^{\varepsilon_1}_s\mid + \mid X^{\varepsilon}_s - X^{\varepsilon_1}_s\mid ) \mathrm{d}s
\Big \} $

\begin{flushleft}
So, there exist $C^* \leq C $ and $\Gamma$, only depending on $L$ and $\gamma_1$, such that, for $T-t \leq C^*$
\end{flushleft}

 $\mathbb{E}   \displaystyle \sup_{t \leq s \leq T} \mid X_s^\varepsilon - X_s^{\varepsilon_1} \mid^2 +  \mathbb{E} \displaystyle \sup_{t \leq s \leq T} \mid Y_s^\varepsilon - Y_s^{\varepsilon_1} \mid^2   + \mathbb{E} \displaystyle \int_t^T \mid Z_s^\varepsilon - Z_s^{\varepsilon_1} \mid^2  \mathrm{d}s $ 
 
 $ \leq \gamma \Big \{  \mathbb{E} \mid h^\varepsilon - h^{\varepsilon_1} \mid^2   + \mathbb{E} \displaystyle \int_t^T \mid  \sigma^\varepsilon (s) -  \sigma^{\varepsilon_1}  (s) \mid^2 \mathrm{d}s  + \mid \sqrt{\varepsilon} - \sqrt{\varepsilon_1} \mid^2 $
 
  \begin{equation}
    +   \mathbb{E} \big ( \displaystyle \int_t^T \mid Y_s^{\varepsilon} - Y_s^{\varepsilon_1} \mid + \mid g^\varepsilon (s) - g^{\varepsilon_1}  (s) \mid  \mathrm{d}s\big )^2   \Big \} 
  \end{equation}
  
  We can repeat the argument in $[T- 2 C^* ; T - C^*]$ and recurrently we get this important a-priori estimate (3.15) valid for  all $[t,T]$, with some (possibly different)  constant $\gamma >0$ only depending on $L$.
  
  Now, from (A.1), the following estimate holds:
  
  \begin{equation}
  \mathbb{E} \mid h^\varepsilon - h^{\varepsilon_1}  \mid^2  \leq L^2 \,\, \mathbb{E} \displaystyle \sup_{t \leq s \leq T} \mid X_s^\varepsilon - X_s^{\varepsilon_1} \mid^2
  \end{equation}
  
  One can also derive the estimate
  \begin{equation*}
   \mathbb{E}  \displaystyle \int_t^T \Big (\mid \sqrt{\varepsilon} \sigma^\varepsilon (s) - \sqrt{\varepsilon_1} \sigma^{\varepsilon_1}  (s)  \mid^2  \Big ) \mathrm{d}s  \leq c_1 \mid \sqrt{\varepsilon} - \sqrt{\varepsilon_1} \mid^2 
  \end{equation*}
  \begin{equation}
     +   c_2 \,\,\, \mathbb{E} \displaystyle \int_t^T \big ( \mid X_s^\varepsilon - X_s^{\varepsilon_1} \mid^2 + \mid Y_s^\varepsilon - Y_s^{\varepsilon_1} \mid^2  \big )\mathrm{d}s
  \end{equation}
  
 \begin{flushleft}
 where $c_1, c_2 > 0$ only depending  on $T$,$L$ and on the bound of $\mid\sigma\mid$.\end{flushleft}
 
   Furthermore, using \textit{Jensen's inequality} and the Lipschitz property of $g$, for some $c_3$ modified eventually along the various steps,
   
   $\mathbb{E} \big ( \displaystyle \int_t^T \mid Y_s^{\varepsilon} - Y_s^{\varepsilon_1} \mid + \mid g^\varepsilon (s) - g^{\varepsilon_1}  (s) \mid  \mathrm{d}s\big )^2 $ 
  
$\leq  \mathbb{E}  \displaystyle \int_t^T \mid Y_s^\varepsilon - Y_s^{\varepsilon_1} \mid^2 + \mid g^\varepsilon - g^{ \varepsilon_1}\mid^2 + 2 \mid   Y_s^\varepsilon - Y_s^{\varepsilon_1} \mid \mid g^\varepsilon - g^{ \varepsilon_1} \mid \mathrm{d}s $
 
  $\leq c_3 \,\, \mathbb{E} \Big [ \displaystyle \int_t^T \mid X_s^\varepsilon - X_s^{\varepsilon_1}\mid^2 +\mid Y_s^\varepsilon - Y_s^{\varepsilon_1}\mid^2 + \mid Z_s^\varepsilon - Z_s^{\varepsilon_1}\mid^2 \Big ] $

  \vspace{-15 pt}
  
  \begin{equation}
  \leq  c_3 \,\, \Big \{ \mathbb{E} \displaystyle \sup_{t \leq s \leq T} \mid X_s^\varepsilon - X_s^{\varepsilon_1} \mid^2 + \mathbb{E} \displaystyle \sup_{t \leq s \leq T} \mid Y_s^\varepsilon - Y_s^{\varepsilon_1} \mid^2 + \mathbb{E} \displaystyle \sup_{t \leq s \leq T} \mid Z_s^\varepsilon - Z_s^{\varepsilon_1} \mid^2 \Big \}
  \end{equation}

\begin{flushleft}
Using (3.6) and the boundedness of $\mid  \sigma \mid$, we have
\end{flushleft}

$\mathbb{E}  \displaystyle \int_t^T \mid Z_s^\varepsilon \mid^2 \mathrm{d}s  \rightarrow 0 $  as  $ \varepsilon \rightarrow 0 $  and

\vspace{-10 pt}

\begin{equation}
\displaystyle \lim_{ \mid \varepsilon - \varepsilon_1 \mid \rightarrow 0}  \mathbb{E}  \displaystyle \int_t^T \mid Z_s^\varepsilon  - Z_s^{\varepsilon_1} \mid^2 \mathrm{d}s = 0
\end{equation} 

 Furthermore, by \textit{Burkholder-Davis-Gundy's inequalities}, there exists a constant $\gamma > 0$, eventually new, such that: 
 
 $\mathbb{E} \displaystyle \sup_{t \leq s \leq T} \mid X_s^\varepsilon - X_s^{\varepsilon_1}\mid^2 +\mathbb{E} \displaystyle \sup_{t \leq s \leq T} \mid Y_s^\varepsilon - Y_s^{\varepsilon_1}\mid^2 + \mathbb{E} \displaystyle \int_t^T \mid Z_s^\varepsilon - Z_s^{\varepsilon_1}\mid^2 \mathrm{d}s $

 $\leq \gamma \mathbb{E} \displaystyle \int_t^T \mid X_s^\varepsilon - X_s^{\varepsilon_1} \mid^2 + \mid Y_s^\varepsilon - Y_s^{\varepsilon_1} \mid^2 + \mid  Z_s^\varepsilon - Z_s^{\varepsilon_1} \mid^2 \mathrm{d}s$

 \vspace{-10 pt}
 
 \begin{equation}
 \leq \gamma \mathbb{E} \displaystyle \int_t^T \displaystyle \sup_{t \leq r \leq s} \mid X_r^\varepsilon - X_r^{\varepsilon_1} \mid^2 + \displaystyle \sup_{t \leq r \leq s} \mid Y_r^\varepsilon - Y_r^{\varepsilon_1} \mid^2 + \displaystyle \sup_{t \leq r \leq s} \mid  Z_r^\varepsilon - Z_r^{\varepsilon_1} \mid^2 \mathrm{d}s  
 \end{equation}
 
 Moreover, for some $\gamma_1, \gamma_2 > 0$ , using (3.18)- (3.21)
 
 $\mathbb{E} \displaystyle \sup_{t \leq s \leq T} \mid X_s^\varepsilon - X_s^{\varepsilon_1}\mid^2 +\mathbb{E} \displaystyle \sup_{t \leq s \leq T} \mid Y_s^\varepsilon - Y_s^{\varepsilon_1}\mid^2 + \mathbb{E} \displaystyle \int_t^T \mid Z_s^\varepsilon - z_s^{\varepsilon_1}\mid^2 \mathrm{d}s$ 
 
$\leq  \gamma_1 \mid \sqrt{\varepsilon} - \sqrt{\varepsilon_1} \mid^2 + 
  \gamma_2 \,\, \mathbb{E} \displaystyle \int_t^T \displaystyle \sup_{t \leq r \leq s} \mid X_r^\varepsilon - X_r^{\varepsilon_1} \mid^2 + \displaystyle \sup_{t \leq r \leq s} \mid Y_r^\varepsilon - Y_r^{\varepsilon_1} \mid^2$ 
  
  \vspace{-10 pt}
  
  \begin{equation}
   +  \displaystyle \sup_{t \leq r \leq s} \mid  Z_r^\varepsilon - Z_r^{\varepsilon_1} \mid^2 \mathrm{d}s
  \end{equation}
 
  Using \textit{Gronwall's inequality}, \\
  
   $\mathbb{E} \displaystyle \int_t^T \displaystyle \sup_{t \leq r \leq s} \mid X_r^\varepsilon - X_r^{\varepsilon_1} \mid^2 + \displaystyle \sup_{t \leq r \leq s} \mid Y_s^\varepsilon - Y_s^{\varepsilon_1} \mid^2 \leq$ 

 \begin{equation}
  \leq C_1  \mid \sqrt{\varepsilon} - \sqrt{\varepsilon_1} \mid^2  \mathbb{E} \displaystyle \sup_{t \leq s \leq T} \mid  Z_s^\varepsilon - Z_s^{\varepsilon_1} \mid^2 \rightarrow 0    
 $  as  $ \mid  \varepsilon - \varepsilon_1 \mid \rightarrow 0 .
 \end{equation}  
 for some $C_1 > 0$ only depending on $L$ (independent of $\varepsilon$), once  \\
 $ \mathbb{E} \displaystyle \sup_{t \leq s \leq T} \mid  Z_s^\varepsilon - Z_s^{\varepsilon_1} \mid^2$ is bounded,
 by the results (3.5) and (3.6)  .
 
We conclude, by the previous estimates,  that the pair $(X_s^{t, \varepsilon}, Y_s^{t, \varepsilon})_{t \leq s \leq T}$ form a Cauchy sequence and therefore  converges in $S^2_t(\mathbb{R}^d) \times S_t^2 (\mathbb{R}^k)$ . Denote $(X_s, Y_s)_{t \leq s \leq T }$ its limit. $(X_s, Y_s, 0)_{t \leq s \leq T}$ is the limit in $\mathcal{M}_t$ of   $(X_s^{t, \varepsilon}, Y_s^{t, \varepsilon}, Z_s^{t, \varepsilon, x})_{t \leq s \leq T}$ when $\varepsilon \rightarrow 0$.

If we consider the forward equation in (2.1) and if we take the limit pointwise when $\varepsilon \rightarrow 0$, we have

\vspace{-10 pt}

\begin{equation}
X_s^t = x + \displaystyle \int_t^s f(r,X_r, Y_r) \mathrm{d}r $   a.s  $ 
\end{equation}
 where we have used the boundedness of $\sigma$ and the continuity of $f$. 
 
Similarly, we can take  the limit on the backward equation when $\varepsilon \rightarrow 0$. Using the continuity of the functions $h,g$ and  

 $\mathbb{E} \big ( \displaystyle \int_s^T Z_r^{t, \varepsilon} \mathrm{d}Br \big )^2 = \mathbb{E} \displaystyle \int_s^T \mid Z_r^{t, \varepsilon}  \mid^2 \mathrm{d}r \rightarrow 0 $ as $\varepsilon \rightarrow 0$

  which implies $  \displaystyle \int_s^T Z_r^{t, \varepsilon} \mathrm{d}Br \rightarrow 0$, $a.s \,\, \mathbb{P}$, we have 
 \begin{equation}
Y_s^t = h(X_T) + \displaystyle \int_s^T g(r,X_r, Y_r, 0) \mathrm{d}r $  a.s  $ 
\end{equation}

 In conclusion, $(X_s, Y_s)_{t \leq s \leq T}$  solves the following deterministic problem of ordinary (coupled) differential equations, almost surely,
 
\begin{equation}
\begin{cases}
\dot{X}_s = f (s, X_s, Y_s) \\
\dot{Y}_s = -g (s, X_s, Y_s, 0), \,\, t \leq s \leq T\\
X_t = x ,
Y_T  = h(X_T)
\end{cases}
\end{equation}
 
 Given $t, t' \in [0,T]$, $x, x' \in \mathbb{R}^d$, consider $(X_s^{t, \varepsilon,x}, Y_s^{t, \varepsilon,x}, Z_s^{t, \varepsilon,x})_{t \leq s \leq T}$ the unique solution in $\mathcal{M}_t$
 
\begin{equation}
\begin{cases}
X_s^{t, \varepsilon, x} = x + \displaystyle \int_t^s f(r, X_r^{t, \varepsilon,x}, Y_r^{t, \varepsilon, x} ) \mathrm{d}r + \sqrt{\varepsilon} \displaystyle \int_t^s \sigma (r, X_r^{t, \varepsilon, x}, Y_r^{t, \varepsilon} ) \mathrm{d}Br \\
Y_s^{t, \varepsilon, x} =   h(X_T^{t,x, \varepsilon}) + \displaystyle \int_s^T g(r, X_r^{t, \varepsilon, x}, Y_r^{t, \varepsilon, x}, Z_r^{t, \varepsilon , x }) \mathrm{d}r -  \displaystyle \int_s^T Z_r^{t, \varepsilon, x }   \mathrm{d}Br \\
  t \leq s \leq T \
\end {cases}
\end{equation}

\vspace{-10 pt}

\begin{flushleft}
extended to the whole interval $[0,T]$, putting \end{flushleft}

\vspace{-20 pt}

\begin{equation}
\forall \,\, 0 \leq s \leq t \,\,\,\, X_s^{t, \varepsilon, x} =x; \,\, Y_s^{t, \varepsilon,x} =Y_t^{t, \varepsilon,x}; \,\, Z_s^{t, \varepsilon, x} = 0 
\end{equation}

\vspace{-20 pt}

\begin{flushleft}
and consider  $(X_s^{t', \varepsilon,x'}, Y_s^{t', \varepsilon,x'}, Z_s^{t', \varepsilon,x'})_{t' \leq s \leq T}$ the unique solution in $\mathcal{M}_{t'}$ \end{flushleft}

 \vspace{-15 pt}
 
\begin{equation}
\begin{cases}
X_s^{t', \varepsilon, x'} = x' + \displaystyle \int_{t'}^s f(r, X_r^{t', \varepsilon,x'}, Y_r^{t', \varepsilon, x'} ) \mathrm{d}r + \sqrt{\varepsilon} \displaystyle \int_{t'}^s \sigma (r, X_r^{t', \varepsilon, x'}, Y_r^{t', \varepsilon, x'} ) \mathrm{d}Br \\
Y_s^{t', \varepsilon, x'} =   h(X_T^{t', \varepsilon, x'}) + \displaystyle \int_s^T g(r, X_r^{t', \varepsilon, x'}, Y_r^{t', \varepsilon, x'}, Z_r^{t', \varepsilon , x' }) \mathrm{d}r -  \displaystyle \int_s^T Z_r^{t', \varepsilon, x' }   \mathrm{d}Br \\
  t' \leq s \leq T \
\end {cases}
\end{equation}

\begin{flushleft}
extended to the whole interval $[0,T]$, putting
\end{flushleft} 
\begin{equation}
\forall \,\, 0 \leq s \leq t' \,\,\,\,  X_s^{t', \varepsilon, x'} =x; \,\, Y_s^{t', \varepsilon,x'} =Y_{t'}^{t', \varepsilon,x'}; \,\, Z_s^{t', \varepsilon, x'} = 0 
\end{equation}

Using the estimate (3.15), such as in the \textit{corollary 1.4} of [5], we are lead to

$\mathbb{E} \displaystyle \sup_{0 \leq s \leq T} \mid X_s^{t, \varepsilon,x} - X_s^{t', \varepsilon, x'} \mid^2 + \mathbb{E} \displaystyle \sup_{0 \leq s \leq T} \mid Y_s^{t, \varepsilon,x} - Y_s^{t', \varepsilon, x'} \mid^2 $

\begin{equation}
+\mathbb{E} \displaystyle \int_0^T \mid Z_s^{t, \varepsilon,x} - Z_s^{t', \varepsilon, x'} \mid^2 \leq  \alpha \mid x - x' \mid^2 + \beta (1 + \mid x \mid^2) \mid t - t' \mid^2
\end{equation}

\begin{flushleft}
where $\alpha, \beta > 0$ are constants only depending on $L, \Lambda$.
\end{flushleft}

Finally, 

\vspace{-15 pt}

\begin{equation}
\mid u^\varepsilon (t,x) - u^\varepsilon (t',x') \mid^2 \leq \alpha \mid x - x' \mid^2 + \beta (1 + \mid x \mid^2) \mid t - t' \mid^2
\end{equation}

\begin{flushleft}

which proves that $(u^\varepsilon)_{\varepsilon >0}$ is a family of equicontinuous maps on every compact set of $[0,T] \times \mathbb{R}^d$. We can apply \textit{Arzela's Ascoli Theorem} and conclude that the  convergence of $u^\varepsilon$ to $u$, where $u(t,x) = Y_t^{t,x}$, is uniform in $[0,T] \times K$, for every $K$ compact subset of $\mathbb{R}^d$.  \end{flushleft} 

Taking the limit in $\varepsilon \rightarrow 0$ in (3.31) we get 

\vspace{-20 pt}

\begin{equation}
\mid u (t', x') - u(t,x) \mid^2 \leq  \alpha \mid x - x' \mid^2 + \beta (1 + \mid x \mid^2) \mid t - t' \mid^2
\end{equation}

\begin{flushleft}
for all $\ (t,x) , (t', x') \in [0,T] \times \mathbb{R}^d$, which proves that the function $u$, limit in $\varepsilon \rightarrow 0$, is Lipschitz continuous in $x$ and uniformly continuous in $t$. The boundedness of $u$ is given by (3.3), since this bound is uniform in $\varepsilon$. \end{flushleft}

\vspace{20 pt}

 Using \textit{theorem 5.1} of [15] we deduce  clearly that $u^\varepsilon$ is a viscosity solution in $[0,T] \times \mathbb{R}^d$ of (2.2). 
 
 Since the coefficients of the quasilinear parabolic system are Lipschitz continuous, by the property of the compact uniform convergence of viscosity solutions for quasilinear parabolic equations (see [4] for details), we conclude that $u$ is a viscosity solution of (3.8).
\newline 
 
Moreover, let $v : [0,T] \times \mathbb{R}^d \rightarrow \mathbb{R}^k$ be a  $C_b^{1,1} ([0,T], \mathbb{R}^k) $ solution, continuous Lipschitz in $x$ and uniformly continuous in $t$ for (3.8). Fixing $(t,x) \in [0,T] \times \mathbb{R}^d$, we take the following function:

$\psi : [t, T] \rightarrow \mathbb{R}^k $ 

\vspace{5 pt}

  $\psi (s) := v (s, X_s^{t, x})$.
  \vspace{10 pt}

 Computing its time derivative:
\vspace{10 pt}

$\dfrac{d \psi}{d s} (s) $

$ = \dfrac{\partial v}{\partial s} (s, X_s^{t,x}) + \displaystyle \sum_{i=1}^d \dfrac{\partial v}{\partial x_i} (s, X_s^{t,x}) \dfrac{\partial (X_s^{t,x})}{\partial t}$ 
 
$= \dfrac{\partial v}{\partial s} (s, X_s^{t,x}) + \displaystyle \sum_{i=1}^d \dfrac{\partial v}{\partial x_i} (s, X_s^{t,x}) f(s, X_s^{t,x}, Y_s^{t,x})$ 

 $= - $g$ (s, X_s^{t,x}, v(s, X_s^{t,x}), 0)$  \\

$\psi (T) = v (T, X_T^{t,x}) = h(x)$ \\

As a consequence,  $v (t,x) = v (t, X_t^{t,x}) = u(t,x)$,  under the hypothesis of uniqueness of solution for the system of ordinary  differential equations (3.7). 
So, under these hypothesis, we have a uniqueness property of solution for (3.8) in the class of $C_b^{1,1} ([0,T] \times \mathbb{R}^d) $, which are Lipschitz continuous in $x$ and uniformly continuous in $t$.   

\end{proof}

\section{A Large Deviations Principle}

In this section we state a Large Deviation Principle  for the laws of the processes $(X_s^{t, \varepsilon, x}, Y_s^{t, \varepsilon, x})_{t \leq s \leq T}$ when the parameter $\varepsilon \rightarrow 0$. 

The main property employed to establish the Large Deviation Principle for $(X_s^{t, \varepsilon, x}, Y_s^{t, \varepsilon, x})_{t \leq s \leq T}$ is 

\vspace{-20 pt}

\begin{equation}
Y_s^{t, \varepsilon, x} = u^\varepsilon (s, X_s^{t, \varepsilon, x})  $ a.s$
\end{equation}

 That property  is the key to generalize the Large Deviation Principle  proved in [18] where  the FBSDE  is decoupled, namely  when $f(t,x,y) = f(t,x)$ and $\sigma (t,x,y) = \sigma (t,x)$.

For our purposes, we need the following results of  Large Deviations Theory.

\begin{theorem} {\textbf{Freidlin-Wentzell Estimates}} ([1],[2])
\newline
Given $t \in [0,T]$ and $x \in \mathbb{R}^d$, consider $b: [t,T] \times \mathbb{R}^d \rightarrow \mathbb{R}^d$ and $\sigma :  [t,T] \times \mathbb{R}^d \rightarrow \mathbb{R}^{d \times d }$ Lipschitz continuous functions, with sublinear growth and bounded in time. For each $\varepsilon > 0$, let  $ b^\varepsilon :  [t,T] \times \mathbb{R}^d \rightarrow \mathbb{R}^d$ and $\sigma^\varepsilon : [t,T] \times \mathbb{R}^d \rightarrow \mathbb{R}^{d \times d } $ be  Lipschitz continuous and with sublinear growth, such that:
\begin{equation}
\displaystyle \lim_{\varepsilon \rightarrow 0} \mid b^\varepsilon - b \mid = \displaystyle \lim_{\varepsilon \rightarrow 0} \mid \sigma^\varepsilon - \sigma \mid  = 0
\end{equation}
uniformly in the compact sets of $[t, T] \times \mathbb{R}^d$.

 The process $(X_s^\varepsilon)_{t \leq s \leq  T}$, the unique strong solution of the stochastic differential equation:
\begin{equation}
\begin{cases}
dX_s^\varepsilon = b^\varepsilon (s, X_s^{\varepsilon}) \mathrm{d}s + \sqrt{\varepsilon} \sigma^\varepsilon (s, X_s^\varepsilon) \mathrm{d}B_s \\
X_t^\varepsilon = x
\end{cases}
\end{equation}
satisfies a Large Deviations Principle in $C_x ([t,T], \mathbb{R}^d)$, the space of the conti- \\ nuous functions $\phi : [t,T] \rightarrow \mathbb{R}^d$ having  origin in $x$ with the good rate function:
\begin{equation}
I(g) = \displaystyle \inf \Big \{ \dfrac{1}{2} || \varphi ||^2_{H^1} : g \in H^1 ([t,T], \mathbb{R}^d) \,\,\, g_s = x + \displaystyle \int_t^s b(r, g_r) \mathrm{d}r + \displaystyle \int_t^s \sigma (r, g_r) \dot{\varphi}_r  \mathrm{d}r \Big \}
\end{equation}
This means that the level sets of I are compact and
\begin{equation}
\displaystyle \limsup_{\varepsilon \rightarrow 0} \varepsilon \log \mathbb{P} (X^\varepsilon \in F) \leq - \displaystyle \inf_{\psi \in F} I(\psi)
\end{equation}
\begin{equation}
\displaystyle \liminf_{\varepsilon \rightarrow 0} \varepsilon \log \mathbb{P} (X^\varepsilon \in G) \geq - \displaystyle \inf_{\psi \in G} I(\psi)
\end{equation}
for every closed set $F \in C_x ([t,T], \mathbb{R}^d)$ and for every open set $G \in C_x ([t,T], \mathbb{R}^d) $
\end{theorem}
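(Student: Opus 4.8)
The plan is to deduce this principle from Schilder's theorem in two stages: first handle the equation driven by $\sqrt{\varepsilon}\,B$ with the \emph{frozen} ($\varepsilon$-independent) coefficients $b,\sigma$, and then remove the $\varepsilon$-dependence of $b^\varepsilon,\sigma^\varepsilon$ by an exponential equivalence argument. (An alternative, equivalent route is the weak-convergence method of Dupuis and Ellis, representing $\varepsilon\log\mathbb{E}\bigl[e^{-F(X^\varepsilon)/\varepsilon}\bigr]$ through a variational formula over controls and passing to the limit along controlled processes; this absorbs the $\varepsilon$-dependence of the coefficients directly, at the cost of heavier machinery.)

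First I would recall Schilder's theorem: $(\sqrt{\varepsilon}\,B)_{\varepsilon>0}$ satisfies a large deviations principle on $C_0([t,T],\mathbb{R}^d)$ with good rate function $\varphi\mapsto\tfrac12\|\varphi\|_{H^1}^2$, finite only on the Cameron--Martin space $H^1([t,T],\mathbb{R}^d)$. Next, for the auxiliary process $\bar X^\varepsilon$ solving
\[
d\bar X_s^\varepsilon = b(s,\bar X_s^\varepsilon)\,\mathrm{d}s + \sqrt{\varepsilon}\,\sigma(s,\bar X_s^\varepsilon)\,\mathrm{d}B_s,\qquad \bar X_t^\varepsilon = x,
\]
I would transport Schilder's principle through the It\^o solution map $\Phi$ sending a driving path to the solution of the corresponding equation. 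The subtle point is that $\Phi$ is \emph{not} continuous on all of $C_0$ in the uniform topology, so the correct device is the extended (approximate) contraction principle --- the ``simple version of the contraction principle'' referred to in the introduction: one replaces $B$ by its piecewise-linear interpolations $B^n$, on which $\Phi$ acts continuously, shows that the solution driven by $B^n$ is an exponentially good approximation of $\bar X^\varepsilon$ as $n\to\infty$ uniformly in $\varepsilon$, and concludes that $\bar X^\varepsilon$ obeys the LDP with rate function obtained by contracting $\tfrac12\|\cdot\|_{H^1}^2$ through the skeleton equation $g_s = x + \int_t^s b(r,g_r)\,\mathrm{d}r + \int_t^s \sigma(r,g_r)\dot\varphi_r\,\mathrm{d}r$. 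This is exactly the rate function $I$ of (4.4), which is good because goodness is preserved by the contraction principle. The exponential approximation step uses Gronwall's inequality and Bernstein-type exponential martingale tail bounds, relying on the Lipschitz and sublinear-growth hypotheses on $b,\sigma$.

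The second stage is to prove that the genuine family $X^\varepsilon$ and the auxiliary family $\bar X^\varepsilon$ are exponentially equivalent, i.e.
\[
\limsup_{\varepsilon\to 0}\ \varepsilon\log\mathbb{P}\Bigl(\sup_{t\le s\le T}\bigl|X_s^\varepsilon-\bar X_s^\varepsilon\bigr|>\delta\Bigr) = -\infty\qquad\text{for every }\delta>0,
\]
whence the principle transfers verbatim. To obtain this I would fix a large radius $R$ and stop both processes at their exit from the ball $B(0,R)$: the probability that either leaves $B(0,R)$ before time $T$ is itself exponentially small, uniformly in $\varepsilon$, by an a priori exponential-moment estimate that follows from the uniform sublinear growth; and on that event the difference is controlled by Gronwall applied to the difference equation, the Lipschitz contributions being absorbed while the driving discrepancy is bounded by $\sup_{s,\,|y|\le R}|b^\varepsilon(s,y)-b(s,y)|$ and $\sup_{s,\,|y|\le R}|\sigma^\varepsilon(s,y)-\sigma(s,y)|$ --- both tending to $0$ by (4.2) --- together with a stochastic integral handled again by a Bernstein exponential inequality. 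Letting first $\varepsilon\to 0$ and then $R\to\infty$ gives the claim.

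The main obstacle is precisely the failure of continuity of the It\^o map in the uniform topology: everything hinges on constructing the exponential approximation by smooth drivers carefully and \emph{uniformly in} $\varepsilon$, and on the localisation forced by the fact that $b^\varepsilon\to b$ and $\sigma^\varepsilon\to\sigma$ only on compact sets. Once these are in place, the estimates (4.5) and (4.6) and the compactness of the level sets of $I$ follow directly from Schilder's theorem, the extended contraction principle, and exponential equivalence.
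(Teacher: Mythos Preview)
Your sketch is correct and follows the classical Azencott route (Schilder's theorem, the extended contraction principle via piecewise-linear approximations of the driving Brownian motion, and an exponential-equivalence/localisation argument to absorb the $\varepsilon$-dependence of the coefficients). Note, however, that the paper does not prove this theorem at all: it is stated as a tool and attributed to references [1] and [2], so there is no ``paper's own proof'' to compare against --- your outline is essentially what one finds in those references.
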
 

The next result is a very important tool in Large Deviations Theory, to transfer Large Deviations Principles from one topological space to another. We present a simple version of the Contraction Principle. 

\begin{theorem} {\textbf{Contraction Principle}  ([7]) 
\newline
If $f : X \rightarrow Y$ is a continuous mapping from a topological vector space $X$ to a metric space $(Y,d)$ and $\{\mu_\varepsilon\}_{\varepsilon > 0}$ satisfies a Large Deviations Principle with a good rate function 
$I : X \rightarrow [0, \infty]$, and for each $\varepsilon >0$ if $f_\varepsilon : X \rightarrow Y$ is a family of  continuous functions, uniformly convergent to $f$ in all  compact sets of $X$, we have that 

$\{ \mu_\varepsilon \circ f_\varepsilon^{-1}\}_{\varepsilon > 0}$ satisfies a Large Deviation Principle with the good rate function:

$J(y) = \displaystyle \inf \{ I(x) : x \in X $ and $  f(x) = y \}$}

\end{theorem}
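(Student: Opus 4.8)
The plan is to deduce the statement from the classical contraction principle for a \emph{fixed} continuous map, together with an exponential--equivalence argument that absorbs the $\varepsilon$--dependence of $f_\varepsilon$. First I would apply the ordinary contraction principle (cf.\ [7]) to the fixed map $f$: the image laws $\nu_\varepsilon:=\mu_\varepsilon\circ f^{-1}$ satisfy a Large Deviations Principle on $Y$ with the rate function $J(y)=\inf\{I(x):\ x\in X,\ f(x)=y\}$, which is again a good rate function (its level sets are continuous images of the compact level sets of $I$). It then remains to prove that the genuine image laws $\mu_\varepsilon\circ f_\varepsilon^{-1}$ obey the \emph{same} Large Deviations Principle.

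The heart of the proof is the estimate that, for every $\delta>0$,
\[
\limsup_{\varepsilon\to0}\ \varepsilon\log\mu_\varepsilon\bigl(\{\,x\in X:\ d(f_\varepsilon(x),f(x))>\delta\,\}\bigr)=-\infty .
\]
To establish it I would use the exponential tightness of $\{\mu_\varepsilon\}$ (available here, since in the Polish path spaces where this theorem is applied it is part of the Freidlin--Wentzell construction recalled above): for each $L>0$ pick a compact $K_L\subset X$ with $\limsup_{\varepsilon\to0}\varepsilon\log\mu_\varepsilon(X\setminus K_L)\le -L$. Because $f_\varepsilon\to f$ uniformly on the compact set $K_L$, for all sufficiently small $\varepsilon$ we have $\sup_{x\in K_L}d(f_\varepsilon(x),f(x))\le\delta$, so $\{x:\ d(f_\varepsilon(x),f(x))>\delta\}\subset X\setminus K_L$ and the displayed limit superior is $\le -L$; letting $L\to\infty$ gives the claim.

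Granting this estimate, I would finish in one of two equivalent ways. Setting $Z_\varepsilon:=f(\xi_\varepsilon)$ and $\widetilde Z_\varepsilon:=f_\varepsilon(\xi_\varepsilon)$ with $\xi_\varepsilon$ of law $\mu_\varepsilon$, the estimate says exactly that the families $\{Z_\varepsilon\}$ and $\{\widetilde Z_\varepsilon\}$ are exponentially equivalent; since exponentially equivalent families obey the same Large Deviations Principle, the law of $\widetilde Z_\varepsilon$, namely $\mu_\varepsilon\circ f_\varepsilon^{-1}$, satisfies the Large Deviations Principle with the good rate function $J$, which is the assertion. Alternatively one argues directly with the inclusions $f_\varepsilon^{-1}(F)\subseteq f^{-1}(F^{\delta})\cup\{x:\ d(f_\varepsilon(x),f(x))>\delta\}$ for $F\subseteq Y$ closed (here $F^{\delta}$ is the closed $\delta$--neighbourhood of $F$, so $f^{-1}(F^{\delta})$ is closed in $X$) and $f^{-1}(G_{\delta})\cap\{x:\ d(f_\varepsilon(x),f(x))<\delta\}\subseteq f_\varepsilon^{-1}(G)$ for $G\subseteq Y$ open (here $G_{\delta}=\{y\in Y:\ d(y,Y\setminus G)>\delta\}$ is open): inserting these into the Large Deviations upper and lower bounds for $\mu_\varepsilon$, discarding the bad set by the estimate above, and letting $\delta\downarrow0$ --- using $\bigcap_{\delta>0}F^{\delta}=F$, $\bigcup_{\delta>0}G_{\delta}=G$ and the goodness of $I$ --- produces the upper and lower bounds with rate function $J$.

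I expect the exponential--equivalence estimate to be the only genuine obstacle: it is the single place where the hypothesis ``$f_\varepsilon\to f$ uniformly on every compact subset of $X$'' is used, and it rests on the exponential tightness of $\{\mu_\varepsilon\}$. Once that ingredient is granted, the remaining passages to the limit are routine and use only the lower semicontinuity of $I$ and the compactness of its level sets.
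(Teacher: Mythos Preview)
The paper does not supply a proof of this theorem: it is stated as a quotation from Dembo--Zeitouni [7] and used as a black box in the proof of Theorem~4.3. There is therefore nothing in the paper to compare your argument against.

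That said, your proposal is the standard proof of the extended contraction principle as it appears in [7] (Theorem~4.2.23 there): reduce to the classical contraction principle for the fixed map $f$, then show that $f_\varepsilon(\xi_\varepsilon)$ and $f(\xi_\varepsilon)$ are exponentially equivalent by combining exponential tightness of $\{\mu_\varepsilon\}$ with the uniform convergence $f_\varepsilon\to f$ on compacts. Your identification of exponential tightness as the essential extra ingredient is exactly right, and you are also right that the paper's statement omits it: in [7] exponential tightness is an explicit hypothesis, and without it the argument does not go through in the generality written here. In the paper's application this causes no trouble, since on the Polish path space $C_x([t,T],\mathbb{R}^d)$ the Freidlin--Wentzell family of Theorem~4.1 is exponentially tight, as you note.
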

\begin{flushleft}
\textbf{Remark}: We define the infimum over the empty set to be $+ \infty$. \\
\end{flushleft}
Under conditions (A.A) and (A.B) are in force, for $T \leq C$, we have 

\begin{theorem} {\textbf{A Large Deviations Principle}} 
\newline
When $\varepsilon \rightarrow 0 $, $ (X_s^{t, \varepsilon})_{t \leq s \leq T} $ obey a Large Deviation Principle in $C([t,T], \mathbb{R}^d)$ with the good rate function 

$I(g) = \displaystyle \inf \Big \{  \frac{1}{2} \displaystyle  \int_t^T \mid \dot{\varphi}_s\mid^2 \mathrm{d}s: g \in H^1([t,T], \mathbb{R}^d) , $

\begin{equation}
 g_s = x + \displaystyle \int_t^s f(r,g_r, u (r, g_r) ) \mathrm{d}r + \displaystyle \int_t^s \sigma (r, g_r, u(r, g_r)) \dot{\varphi}_r \mathrm{d}r  \,\, s \in [t,T]   \Big \}
 \end{equation}
  
for $\varphi \in C([t,T], \mathbb{R}^d)$ 

and $ (Y_s^{t, \varepsilon})_{t \leq s \leq T} $ obey a LDP in $C([t,T], \mathbb{R}^k)$ with the good rate function
\begin{equation}
J (\psi) = \displaystyle \inf \Big \{  I (\varphi ) :  F(\varphi) = \psi $ if  $ \psi \in H^1([t,T], \mathbb{R}^d )  \Big \}
\end{equation}

where $F(\varphi) (s) = u(s, \varphi_s)$  for all $\varphi \in C([t,T], \mathbb{R}^d)$ 

\end{theorem}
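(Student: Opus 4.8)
The plan is to reduce everything to the two cited large-deviations results by exploiting the a-posteriori representation $Y^{t,\varepsilon}_s = u^\varepsilon(s, X^{t,\varepsilon}_s)$ (equation (4.1)) together with the uniform-in-$\varepsilon$ regularity of $u^\varepsilon$ and the compact-uniform convergence $u^\varepsilon \to u$ obtained in Theorem 3.1. Substituting that identity into the forward equation of (2.1) turns it into a genuine (decoupled) SDE of the Freidlin--Wentzell type (4.3), driven by $b^\varepsilon(s,x):=f(s,x,u^\varepsilon(s,x))$ and $\sigma^\varepsilon(s,x):=\sigma(s,x,u^\varepsilon(s,x))$. I would then verify the hypotheses of Theorem 4.2 for these data: Lipschitz continuity in $x$ with a constant independent of $\varepsilon$ follows from (A.1) and the uniform gradient bound (3.5); boundedness (hence sublinear growth) and boundedness in time follow from (B.1) together with the uniform bound (3.3) on $u^\varepsilon$; the limiting coefficients $b(s,x):=f(s,x,u(s,x))$ and $\sigma(s,x):=\sigma(s,x,u(s,x))$ inherit the same properties from part 3 of Theorem 3.1. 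Finally the required convergence $b^\varepsilon\to b$, $\sigma^\varepsilon\to\sigma$ uniformly on compact subsets of $[t,T]\times\mathbb{R}^d$ reduces, via (A.1), to the uniform-on-compacts convergence $u^\varepsilon\to u$ established through Arzel\`a--Ascoli in the proof of Theorem 3.1. Theorem 4.2 then yields the LDP for $(X^{t,\varepsilon}_s)_{t\le s\le T}$ in $C([t,T],\mathbb{R}^d)$ with the good rate function $I$ of (4.7) (the constraint $g_t=x$ forces $I=+\infty$ off $C_x([t,T],\mathbb{R}^d)$, and the $\tfrac12\|\varphi\|_{H^1}^2$ of (4.4) coincides with $\tfrac12\int_t^T|\dot\varphi_s|^2\,\mathrm{d}s$ in the Cameron--Martin convention).

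For the backward process I would invoke the Contraction Principle (Theorem 4.3) with $X=C([t,T],\mathbb{R}^d)$, $Y=C([t,T],\mathbb{R}^k)$, the perturbed maps $f_\varepsilon(\varphi)(s):=u^\varepsilon(s,\varphi_s)$ and the limit $F(\varphi)(s):=u(s,\varphi_s)$. Continuity of each $f_\varepsilon$ and of $F$ on $C([t,T],\cdot)$ with the uniform topology is immediate from the (uniform) continuity of $u^\varepsilon$, resp.\ $u$, on compact sets. The one point that deserves care is the uniform convergence $f_\varepsilon\to F$ on every compact $K\subset C([t,T],\mathbb{R}^d)$: here the evaluation map $(s,\varphi)\mapsto\varphi_s$ sends the compact set $[t,T]\times K$ onto a compact set $\widetilde K\subset\mathbb{R}^d$, so that $\sup_{\varphi\in K}\|f_\varepsilon(\varphi)-F(\varphi)\|_\infty\le\sup_{[t,T]\times\widetilde K}|u^\varepsilon-u|\to 0$, once more by the compact-uniform convergence of Theorem 3.1. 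Since $Y^{t,\varepsilon}=f_\varepsilon(X^{t,\varepsilon})$ almost surely, so that the law of $Y^{t,\varepsilon}$ is the pushforward of the law of $X^{t,\varepsilon}$ under $f_\varepsilon$, Theorem 4.3 gives the LDP for $(Y^{t,\varepsilon}_s)_{t\le s\le T}$ with rate function $J(\psi)=\inf\{I(\varphi):F(\varphi)=\psi\}$, the infimum over the empty set being $+\infty$; this is exactly (4.8). One also notes that $F$ maps the effective domain of $I$ (absolutely continuous paths) into absolutely continuous paths, because $u$ is Lipschitz in $x$ and uniformly continuous in $t$, which is what underlies the restriction $\psi\in H^1$ in (4.8).

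Thus the proof is largely a bookkeeping exercise on top of Section 3. The only genuine obstacle is to check that \emph{every} hypothesis of Theorems 4.2 and 4.3 holds with constants and convergences that are uniform in $\varepsilon$ --- and this is precisely what the uniform bounds (3.3) and (3.5) and the compact-uniform convergence $u^\varepsilon\to u$ of Theorem 3.1 were designed to supply. Without the gradient estimate (3.5) the Lipschitz constant of $b^\varepsilon$ could blow up as $\varepsilon\to 0$, and without the compact-uniform convergence the version of the Contraction Principle with a varying family $f_\varepsilon$ would not be applicable.
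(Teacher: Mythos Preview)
Your proposal is correct and follows essentially the same approach as the paper's own proof: substitute the a-posteriori identity $Y^{t,\varepsilon}_s=u^\varepsilon(s,X^{t,\varepsilon}_s)$ into the forward SDE, apply the Freidlin--Wentzell estimates to the resulting decoupled equation with coefficients $b^\varepsilon(s,x)=f(s,x,u^\varepsilon(s,x))$ and $\sigma^\varepsilon(s,x)=\sigma(s,x,u^\varepsilon(s,x))$, then transfer the LDP to $Y^\varepsilon$ via the Contraction Principle using $F^\varepsilon(\varphi)(s)=u^\varepsilon(s,\varphi_s)$ and the compact-uniform convergence $u^\varepsilon\to u$ from Theorem~3.1. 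Note only that your theorem numbering is off by one throughout --- what you cite as Theorem~4.2 is the paper's Theorem~4.1 (Freidlin--Wentzell), and what you cite as Theorem~4.3 is the paper's Theorem~4.2 (Contraction Principle).
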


\begin{proof} 

Since $Y_s^{t, \varepsilon, x} = u^\varepsilon (s, X_s^{t, \varepsilon, x})$, the first equation on the FBSDE (2.1) is in the differential form given by (we omit the indices )

\vspace{-20 pt}

\begin{equation}
\begin{cases}
dX_s^\varepsilon = b^\varepsilon (s, X_s^\varepsilon) \mathrm{d}s + \sqrt{\varepsilon} \sigma_1^\varepsilon (s, X_s^\varepsilon) \mathrm{d}Bs; \,\, t \leq s \leq T \\
X_t^\varepsilon = x
\end{cases}
\end{equation}

if we write $b^\varepsilon (s, X_s^{\varepsilon})= f (s, X_s^{\varepsilon}, u^\varepsilon (s, X_s^{\varepsilon}))$ and $\sigma_1^{\varepsilon} (s, X_s^{\varepsilon})= \sigma (s, X_s^{\varepsilon}, u^\varepsilon (s, X_s^{\varepsilon})) $. 

We have  therefore a typical setting in which we can apply \textit{theorem 4.1}. 

Define:
\newline

 $b^\varepsilon : [0,T] \times \mathbb{R}^d  \rightarrow \mathbb{R}^d$

$b^\varepsilon (t,x) = f(t,x,u^\varepsilon (t,x))$ 

$\sigma_1^\varepsilon : [0,T] \times \mathbb{R}^d  \rightarrow \mathbb{R}^{d \times d }$

$\sigma_1^\varepsilon (t,x) = \sigma (t,x,u^\varepsilon (t,x))$ 
\newline

 We notice that $b^\varepsilon$  and $\sigma_1^\varepsilon$  are clearly Lipschitz continuous, with sub-\\linear growth. 
 
Since $\displaystyle \lim_{\varepsilon \rightarrow 0} \mid u^\varepsilon (t,x) - u(t,x) \mid = 0 $ uniformly in all compact sets of $[0,T] \times \mathbb{R}^d $ as we remarked in the\textit{ theorem 3.1} before,  by the Lipschitz pro-\\perty of the coefficients of the FBSDE (2.1)- assumption \textit{(A.1)}

\vspace{-10 pt}

\begin{equation}
 \displaystyle \lim_{\varepsilon \rightarrow 0} \mid b^\varepsilon (t,x) - b(t,x) \mid = \displaystyle \lim_{\varepsilon \rightarrow 0} \mid \sigma^\varepsilon (t,x) - \sigma (t,x) \mid = 0.  \\
\end{equation} 

\vspace{-10 pt}

\begin{flushleft}
uniformly in all compact sets of $[0,T] \times \mathbb{R}^d$, 

\end{flushleft}
where $b$ and $\sigma$ are defined by the obvious way:

$b(t,x) = f(t,x, u(t,x))$ and $\sigma (t,x) = \sigma (t,x, u(t,x))$. 

\vspace{-10 pt}

 \begin{flushleft}
By the standard  existence and uniqueness theory for ordinary differential equations, since $b$ and $\sigma_1$ are Lipschitz continuous,  the problem
\end{flushleft}
 
 \vspace{-15 pt}
 
 \begin{equation}
 \begin{cases}
 \dot{g}_s = b(s, g_s) \mathrm{d}s + \sigma (s, g_s) \dot{h}_s ; \,\,\, h \in H^1 ([t,T], \mathbb{R}^d) \\
 g_t = x
 \end{cases}
 \end{equation}
 
\vspace{-20 pt} 
 
\begin{flushleft}
has a unique solution in $C ( [t,T], \mathbb{R}^d)$;  we will denote it $g = S_x (h)$.\\
\end{flushleft}

We have defined an operator 
$S_x : H^1 ([t,T], \mathbb{R}^d) \rightarrow C ([t, T], \mathbb{R}^d)$ 
which is uniformly continuous (i.e. continuous for the supremum norm ). We can therefore  apply  \textit{theorem 4.1} and state that $(\mathbb{P} \circ (X^\varepsilon)^{-1})_{\varepsilon > 0}$ obey a Large Deviation Principle  with the good rate function 
\newline

$I(g) = \displaystyle \inf \Big \{  \frac{1}{2} \displaystyle  \int_t^T \mid \dot{\varphi}_s\mid^2 \mathrm{d}s: g \in H^1([t,T], \mathbb{R}^d) ,$ 

\vspace{-15 pt}

\begin{equation}
 g_s = x + \displaystyle \int_t^s f(r,g_r, u (r, g_r) ) \mathrm{d}r + \displaystyle \int_t^s \sigma (r, g_r, u(r, g_r)) \dot{\varphi}_r \mathrm{d}r  \,\, s \in [t,T]   \Big \}
 \end{equation}
  
  \vspace{-10 pt}
  
for $\varphi \in C([t,T], \mathbb{R}^d)$ 
\newline
 
In what follows, in order to prove a LDP to $(Y_s^\varepsilon)_{t \leq s \leq T}$ we consider the following operator: 
\newline

$F^\varepsilon : C( [t, T], \mathbb{R}^d) \rightarrow C( [t,T], \mathbb{R}^k )$ 

\vspace{-20 pt}

\begin{equation}
F^\varepsilon (\varphi ) (s) = u^\varepsilon (s, \varphi_s)
\end{equation}

 We observe that $Y_s^{\varepsilon} = F^\varepsilon (X_s^\varepsilon)$ for all $s \in [t, T]$. 
 
To establish a Large Deviation Principle  for $(\mathbb{P} \circ (Y^\varepsilon)^{-1})_{\varepsilon > 0}$ we will proceed using   the Contraction Principle in the form presented in \textit{theorem 4.2}.  

Proving  \textit{the continuity of $F^\varepsilon$: }

Let $\varepsilon > 0$ and $x \in C([t, T], \mathbb{R}^d)$. Let $(x_n)_{n \in \mathbb{N}}$ be a sequence in $C( [t,T], \mathbb{R}^d)$ converging to $x$ in the uniform norm. Fix $\delta > 0$. Since $\parallel x_n - x \parallel_{\infty} \rightarrow 0$, there exists $M > 0$ such that $\parallel x \parallel_{\infty}$ , $\parallel x_n \parallel_{\infty}  \leq M$. 

Due to \textit{theorem 4.1}, we know that  $u^\varepsilon $ is  a continuous function in $[0,T] \times \mathbb{R}^d$  and $u^\varepsilon$ is uniformly continuous in $[t, T] \times K $ where $K = \overline{B(0,M)} \subset \mathbb{R}^d$. 

There exists $\eta > 0$ such that for $s, s_1 \in [t,T]$ and $z, z_1 \in K$ $\mid s - s_1 \mid < \eta$ and $\mid z - z_1 \mid < \eta $, we have $\mid  u^\varepsilon (s,z) - u^\varepsilon (s_1, z_1) \mid < \delta$. 

Since $x_n \rightarrow x$ in $C([t,T], \mathbb{R}^d)$, fix $n_0 \in \mathbb{N}$ such that for all $n \geq n_0 $ we have $\parallel x_n - x\parallel_{\infty} < \eta$. 

For all $r \in [t, T]$ and for all $n \geq n_0$ $x_n (r), x(r) \in K$ and
\vspace{10 pt}

 $ \mid  u^\varepsilon (r,x(r) ) - u^\varepsilon (r, x_n (r) ) \mid < \delta$. 

\vspace{10 pt}

So, we conclude that $F^\varepsilon (x_n) \rightarrow F^\varepsilon (x)$ , which proves the continuity of $F^\varepsilon$ in the point $x \in C([t,T], \mathbb{R}^d)$. 

The next step consists in showing the  uniform convergence, in the compact sets of $C([t,T], \mathbb{R}^d)$ of $F^\varepsilon$  to $F$,  when $\varepsilon \rightarrow 0$, where $F(\varphi) (s) = u(s, \varphi_s)$  for all $\varphi \in C([t,T], \mathbb{R}^d)$.

Due  to the point 1 of the \textit{theorem 3.1},  we know that for all $x \in \mathbb{R}^d $

\vspace{-18 pt}

\begin{equation}
\mathbb{E} \displaystyle \sup_{t \leq s \leq T} \mid Y_s^{\varepsilon, t, x} - Y_s^{t, x} \mid^2 + \mathbb{E} \displaystyle \int_t^T \mid Z_s^{\varepsilon, t, x} \mid^2 \mathrm{d}s \rightarrow 0.
\end{equation}

Consider $K$ a compact set of $C( [t,T], \mathbb{R}^d)$ and let

$A: = \{ \varphi_s : \varphi \in K, s \in [t,T] \}$. 

 It is clear that $A$ is a compact set of $\mathbb{R}^d$. Using  (4.14)  we observe that 
 \newline

$\displaystyle \sup_{\varphi \in K} \parallel F^\varepsilon (\varphi) - F(\varphi) \parallel_{\infty}^2 = \displaystyle \sup_{\varphi \in K} \displaystyle \sup_{s \in [t, T]} \mid u^\varepsilon (s, \varphi_s) - u (s, \varphi_s)\mid^2 $

$= \displaystyle \sup_{\varphi \in K} \displaystyle \sup_{s \in [t, T]} \mid Y_s^{\varepsilon, s , \varphi_s} - Y_s^{ s, \varphi_s}\mid^2 $ 
 
 \vspace{-15 pt}
 
 \begin{equation}
 \leq \displaystyle \sup_{x \in A} \displaystyle \sup_{s \in [t, T]} \mid Y_s^{\varepsilon, s , x} - Y_s^{ s, x}\mid^2 \rightarrow 0 $  as   $ \varepsilon \rightarrow 0 
 \end{equation}

\begin{flushleft}
using the uniform convergence of $u^\varepsilon$ to $u$ in the compact sets of $[0,T] \times \mathbb{R}^d$.

\end{flushleft}
Using \textit{ theorem 4.2, } we conclude that $(\mathbb{P} \circ (Y^\varepsilon)^{-1})_{\varepsilon > 0}$ satisfies a LDP principle with the good rate function

\vspace{-20 pt}

\begin{equation}
J (\psi) = \displaystyle \inf \Big \{  I (\varphi ) :  F(\varphi) = \psi; \varphi \in H^1([t,T], \mathbb{R}^d)   \Big \}
\end{equation} 
for $\psi \in C([t, T], \mathbb{R}^k)$

\end{proof}

\section{Remarks and conclusions}

The results presented in this work - \textit{theorem 3.1} and \textit{theorem 4.3}, were stated under the assumptions \textit{(A.A)} and \textit{(A.B)}, which ensure existence and uniqueness of solution of (2.1) for a local time $T \leq C$, where $C$ is a certain constant only depending on the Lipschitz constant $L$. Under these assumption, our results and the existence and uniqueness of solution for the FBSDE (2.1) ([5], [14]) do not depend on results for PDE's but only on probabilistic arguments. However, it is possible to extend  theorem 3.1 and  theorem 4.3 to global time. In order to do it and to prove the important properties \textit{(3.3) - (3.5)} in [5],  results of deterministic quasilinear parabolic partial differential equations [11] are used. If we maintain the assumption of smoothness on the coefficients of the FBSDE (2.1) (B.4), the\textit{ Four Step Scheme Methodology }ensures existence and uniqueness of solution for (2.2), and if we remove this recquirement of smoothness on the coefficients of the system (2.1), a regularization argument in [5] is used  in order to prove it. In this setting,  we can also conclude the claims (2) and (3) of  \textit{theorem 3.1} and \textit{theorem 4.3}.
\begin{flushleft}
\textbf{Acknowledgments}
\end{flushleft}
The authors wish to thank Fernanda Cipriano for   reading this work, and for her constructive suggestions and questions.

\section*{References} 

\setlength{\parindent}{0cm}  

[1] Azencott R., \textit{Grandes D\'{e}viations et Applications, \'{E}cole d\'{}\'{E}t\'{e} de Probabiliti\'{e}s de Saint-Flour},  1-76, Lecture Notes in Mathematics, vol.774, Springer 1980. 
\newline

[2] Baldi P., Chaleyat-Maurel M., A\textit{n Extension of Ventzell-Freidlin Estimates, Stochastic Analysis and Related Topics}, Silivri 1986, 305-327,  Lecture Notes in Math., 1316, Springer, Berlin, 1988. 
\newline

[3] Bismut J.M.,\textit{ Th\'{e}orie Probabilistique du Contr\^{o}le des Diffusions}, Mem. Amer. Math. Society, 176, Providence, Rhode Island, 1973 
\newline

[4]Crandall M., Ishii H., Lions P.L. ,  \textit{User\`{}s Guide to Viscosity Solutions of Second Order Partial Differential Equations}, Bull. Amer. Math. Soc. 27, 1-67, 1982 
\newline

[5] Delarue F., \textit{On the Existence and Uniqueness of Solutions to FBSDEs in a Non Degenerate Case}, Stoch. Processes and their Applications, 99, 209-286, 2002 
\newline

[6] Delarue F. , \textit{Estimates of the Solutions of a System of Quasi-linear PDEs. A probabilistic scheme}, S\'{e}minaire des Probabilit\'{e}s XXXVII,  290-332,   Lecture Notes in Mathematics 1832, 2003, Springer 
\newline

[7] Dembo A. , Zeitouni O., \textit{Large Deviations Techniques and Applications}, 2nd edition, Jones and Bartlett, 1998
\newline

[8] Doss H. ,  Rainero S., \textit{Sur l\'{}Existence, l\'{}Unicit\'{e}, la Stabilit\'{e} et les Propriet\'{e}t\'{e}s de Grandes D\'{e}viations des Solutions d\'{}\'{E}quations Diff\'{e}rentielles Stochastiques R\'{e}trogrades \`{a} Horizon Al\'{e}atoire. Application \`{a} des Probl\`{e}mes de Perturbations Singuli\`{e}res}, Bull. Sciences Math. 13, 99-174, 2007 
\newline

[9] Essaki H. , \textit{Large Deviation Principle for a Backward stochastic Differential Equation with a Subdifferential Operator}, Comptes Rendus Mathematiques, Acad. Sci. Paris, I-346, 75-78, 2008 
\newline

[10] Frei C., Reis G., \textit{Quadratic FBSDE with Generalized Burgers Type Nonlinearities, PDE Perturbation and Large Deviations}, to appear in Stochastic and Dynamics
\newline

[11] Freidlin M. , Wentzell D.A., \textit{Random Perturbations of Dynamical Systems}, 2nd ed., Springer, 1998 
\newline

[12] Ladyzhenskaja O.A. , Solonnikov V.A., Uraliceva N.N.,\textit{ Linear and Quasilinear Equations of Parabolic Type}, AMS, Providence, RI, 1968
\newline

[13] Lions J.L. , \textit{Quelques M\'{e}thodes de R\'{e}solution aux Probl\`{e}mes Non Lin\'{e}aires}, Dunod,  2002
\newline

[14] Ma J. , Protter P., Young J., \textit{Solving Forward-Backward Stochastic Differential Equations Explicitly - a Four Step Scheme}, Probability Theory and Related Fields, 98, 339-359
\newline

[15] Pardoux E., \textit{Backward Stochastic Differential Equations and Viscosi-ty Solutions of Systems of Semilinear Parabolic and Elliptic PDEs of 2nd order}, Stoch. Analysis and Related Topics, The Geilo Workshop, 79-127,  Birkhauser,1996
\newline

[16] Pardoux E., Tang S., \textit{Forward Backward Stochastic Differential Equations and Quasilinear Parabolic Partial Differential Equations}, Prob. Theory and Related Fields, 114, 123-150,1999
\newline

[17] Peng S., \textit{Probabilistic Interpretation for Systems of Semilinear Parabolic PDEs}, Stochastics and Stochastic Reports 38, 119-139, 1992
\newline

[18] Rainero S., \textit{Un Principe de Grandes D\'{e}viations pour une Equation Diff\'{e}rentielle Stochastique Progressive-R\'{e}tr\'{o}grade}, Comptes Rendus Mathematiques, Acad. Sci. Paris, I-343, 141-144, 2006
\newline

[19]  Thalmaier, A., \textit{On the differentiation of heat semigroups and Poisson integrals}, Stochastics Stochastics Rep., 61, 297-321, 1997
\newline

[20] Thalmaier, A., Wang, F.Y.,\textit{ Gradient estimates for harmonic functions on regular domains in Riemannian manifolds}, J. Funct. Anal., 155, 109-124, 1998 
\newline

[21] Ton, B.A., \textit{Non Stationary Burger Flows with Vanishing Viscosity in Bounded Domains in $\mathbb{R}^3$}, Math. Z., 145, 69-79, 1975
\newline

\vspace{30 pt}

GFMUL and Dep. de Matem\'{a}tica IST(TUL). 

Av. Rovisco Pais 

1049-001 Lisboa, Portugal 

E-mail address: abcruz@math.ist.utl.pt

\vspace{15 pt}

GFMUL - Grupo de F\'{i}sica Matem\'{a}tica 

Universidade de Lisboa 

Avenida Prof. Gama Pinto 2 

1649-003 Lisboa, Portugal

E-mail address:  adeoliveiragomes@sapo.pt

\end{document}